 \newtheorem{thm}{Theorem}[section]
 \newtheorem{cor}[thm]{Corollary}
 \newtheorem{lem}[thm]{Lemma}
 \newtheorem{prop}[thm]{Proposition}
 \theoremstyle{definition}
 \newtheorem{defn}[thm]{Definition}
 \newtheorem{rem}[thm]{Remark}
 \numberwithin{equation}{section}
 \newtheorem{eg}{Example}[section]
  \newcommand{\st}{\rm{such\  that\ }}
  \newcommand{\rk}{{\rm rank\ }}
  \newcommand{\iso}{\simeq}
     \newcommand{\Aff}{\text{Aff}}
\newcommand{\SP}{{\rm {Sp}\,}}
\DeclarePairedDelimiterX\Set[2]{\lbrace}{\rbrace}
 { #1 \,\delimsize|\,\mathopen{} #2 }
\newcommand{\Inf}{\rm{Inf}\,}
\def\({\left(}
\def\){\right)}
\def\st{such that }
 \newcommand{\R}{\mathbb{R}}
 \newcommand{\Z}{\mathbb{Z}}
  \newcommand{\N}{\mathbb{N}}
  \newcommand{\Q}{\mathbb{Q}}
\begin{document}
\title[]
 {Orbit equivalence of Cantor minimal systems and their continuous spectra}

\author{T. Giordano}\let\thefootnote\relax\footnote{The three authors were partially supported by NSERC operating grants.}
\address{Department of Mathematics and Statistics,  University of Ottawa, K1N6N5, ON   Canada}
\email{giordano@uottawa.ca}
\author{D. Handelman}
\address{Department of Mathematics and Statistics,  University of Ottawa, K1N6N5, ON   Canada}
\email{dehsg@uottawa.ca} 
\author{M. Hosseini}
\address{School of Mathematics, Institute for Research in Fundamental Sciences (IPM), P. O. Box 19395-5746, Tehran, Iran}
\email{maryhoseini@ipm.ir}

\subjclass[2010]{37A20, 37B05. 19K14}

\maketitle
\begin{abstract}
To any continuous eigenvalue of a Cantor minimal system $(X,\,T)$, we associate an element of the dimension group $K^0(X,\,T)$ associated to $(X,\,T)$. We   introduce and study  the concept of {\it irrational miscibility\/} of a dimension group. The main property of these dimension groups is the absence of   irrational values in the additive group of continuous spectrum   of their realizations by Cantor minimal systems.  The strong orbit equivalence (respectively orbit equivalence) class of a Cantor minimal system associated to an irrationally miscible  dimension group $(G,\,u)$ (resp. with trivial infinitesimal subgroup) with trivial rational subgroup, have no non-trivial  continuous eigenvalues.

\end{abstract}

\section{Introduction}
The study of orbit equivalence in measurable dynamics was initiated by H. Dye \cite{Dye}, who proved in particular that any two ergodic probability measure preserving transformations are orbit equivalent. Therefore the spectrum of an ergodic probability measure preserving systems is independent of its orbit equivalence class. 
\medskip

In topological dynamics, the study of orbit equivalence was initiated by M. Boyle \cite{boyle} and was developed for Cantor minimal systems  by T. Giordano, I. Putnam and C. Skau in \cite{GPS} where the notion of strong orbit equivalence was also introduced; the two notions of orbit equivalence and strong orbit equivalence were characterized using two dimension groups $K^0(X,\,T)$ and $K^0_m(X,\,T)$ naturally associated to a Cantor minimal system $(X,\,T)$.  Since then, the study of topological orbit equivalence has been very active (see for example \cite{DSH}, \cite{GPMS1}, \cite{GPMS},  \cite{Ormes1}, \cite{Ormes}). 

\medskip
The  topological version of Dye's theorem  proved in \cite{GPS}, states  that a  uniquely ergodic Cantor minimal
system is orbit equivalent either to an {\it odometer} or to a {\it
Denjoy system}
\cite{GPS}. Odometers are
rotations on  a Cantor set and  therefore {\it automorphic\/}
systems (that is,  minimal isometries on  compact topological groups). Denjoy
systems (in special cases they are known as Sturmian systems), are
almost  one to one extensions of
irrational
rotations on the unit circle.
\medskip

For a non-uniquely ergodic Cantor minimal system, $(X,\,T)$,  the situation is
 more complicated as not every such system is 
orbit equivalent to an almost automorphic system.
Related to this phenomenon is the nature of the spectra
$${\rm Sp}(T)=\{\lambda\in {\mathbb S}^1:\ \exists \ f\in C(X,\,\mathbb C);\ f\circ T=\lambda f\}$$
(in the literature often
termed  the {\it continuous spectrum,} a standard object in operator
theory meaning something completely different).

 Recall (see \cite{GPS}, section 2.3) that for a Cantor minimal system $(X,\,T)$, its associated  {\it pointed dimension group,} $(K^0(X,\,T),\,K^0_+(X,\,T),\,[1_X])$,  is a complete invariant for  strong orbit equivalence.
\medskip

Information about the relationship between  (strong) orbit equivalence
class of
Cantor minimal systems and their spectra can be derived from the value
group of their invariant measures on clopen sets,
and also from the {\it rational subgroup,} $\mathbb Q(K^0(X,\,T),\,[1_X])$,  of
their dimension
group. In \cite{Ormes1},  Ormes  proved that for any $\lambda=e^{2\pi
i\theta}$ in the spectrum of a Cantor minimal system $(X,\,T)$
there exists $f\in C(X,\,\mathbb Z)$ \st $f-\theta 1_X$ is a real
coboundary. From Theorem \ref{clopen}, the
function $f$ can be chosen to be the indicator function of a clopen set.
Consequently,  for any $0\leq \theta <1$, such that $e^{2\pi i\theta}\in {\rm Sp}(T)$, there exists a clopen set whose measure is $\theta$ with respect to all invariant probability measures on $X$. Equivalently,
$$
\SP(T)\subseteq\bigcap_{\tau\in S(K^0(X,\,T),\, [1_X])}\tau(K^0(X,\,T)),
$$
where $S(K^0(X,\,T),\, [1_X])$ denotes the set of normalized traces (states) on $K^0(X,\,T)$. This result has been recently   stated in \cite[Theorem 1.]{cortez} as well.
(The reverse inclusion fails in general, as there are many
uniquely ergodic weakly mixing systems.)

Therefore, if a Cantor minimal system has an   eigenvalue $\exp(2\pi
i\theta)$ for some irrational $\theta$, there will be an irrational number
in the set of values of its traces. But not all the irrational numbers
which  appear in   the  intersection are eigenvalues for
$(X,\,T)$. If the intersection contains only rational numbers (the pointed
dimension group is then called {\it irrationally mixing\/}), there will be
no irrational eigenvalues (the converse fails), and when this occurs, it
is relatively easy to decide when all systems orbit or strongly orbit
equivalent to it are weakly mixing. This applies to various classes of
simple dimension groups, for example, if $G$ has  $n>1$
pure traces, and either $\rk(G/\Inf(G))=n$ or if $\rk G = n+1$ and $G$ is
finitely generated (as an abelian group), then all realizations of $G$ by
Cantor minimal systems cannot be even
Kakutani equivalent to a Cantor minimal system with irrational
eigenvalues. There are large classes of such dimension groups already in
the literature.

The rational subgroup of a dimension group corresponds to  the rational part of the
spectrum. Indeed, it was proved in \cite{Ormes} that $exp(2\pi i1/p)\in
\SP(T)$ if and only if $1/p\in\mathbb Q(K^0(X,\,T),[1_X])$. When the system is weakly
mixing, the rational subgroup is
isomorphic to $\Z$ (a rank one cyclic subgroup).  We describe   in Proposition \ref{weakmixing}  (resp. \ref{last}),  strong orbit equivalent (resp. orbit equivalent)  classes of Cantor minimal systems which are   weakly mixing.
\medskip

Let $(B,\,V)$ be a Bratteli diagram whose associated dimension group has a trivial rational subgroup. In Theorem \ref{propo}, we show that there exists a telescoping $\tilde{B}$ of $B$ and a proper ordering on $\tilde{B}$ whose associated Bratteli-Veshik system is weakly mixing.

\subsection{Topological dynamical systems}

A {\it topological dynamical system\/} is a pair $(X,\,T)$ wherein $X$ is a
compact metric space and $T$  is a self-homeomorphism. A topological
dynamical system is said to be {\it topologically transitive\/} if there exists a point
in $X$ with dense orbit;  it is  {\it minimal\/}
when every point has  dense orbit.
Two dynamical systems $(X,\,T)$ and $(Y,S)$ are {\it conjugate\/} if there exists a homeomorphism $h: X\rightarrow Y$ such that
$h\circ T=S\circ h$. If $h$ is merely a continuous surjection then $(Y,\,S)$ is a {\it factor of $(X,\,T)$\/} and the
function $h$ is called a {\it factor map\/}, and $(X,\,T)$  an
{\it  extension of $(Y,\,S)$.} An {\it almost one to one extension\/} of a
minimal system is an extension $h:X\rightarrow Y$ for which  the set of
points with unique  pre-image is residual in $Y$; this  is equivalent to
there being at least one point with  unique pre-image.

By a {\it Cantor system,} we mean a topological dynamical system on a
totally disconnected metric space without isolated points. As an example one may consider a {\it subshift} system \cite{glasner}; let $A=\{1,\,\dots,\,\ell\}$ and $\Omega=A^{\Z}$ of all bi-infinite sequences on the finite alphabet $A$ with the product topology.  Let $(\Omega,\,T)$ be the topological dynamical system defined by the  left shift map. If $X$ is any $T$-invariant closed subset of $\Omega$,  $(X,T)$ is
called a {\it subshift}.  {\it Substitutions\/} are   Cantor minimal  subshifts.

{\it Kronecker  systems }are minimal systems on a compact metric group; examples
include  irrational rotations on the unit circle and {\it odometers\/} on a shift space. The
odometer associated to the sequence $(a_1, a_2,\dots)$, $a_i\geq 2$ is the pair
$(X,\,T)$ with
  $X=\prod_{n=1}^\infty\{0,1,\dots,a_n-1\}$ endowed with the product
topology and $T$ is defined by addition of $(1, 0, 0, \dots)$ with
carriage to the right. An almost one to one extension of a Kronecker
system is called an {\it  almost automorphic\/} system.

\bigskip

A complex number $\lambda=e^{2\pi i\theta}$ is
a {\it continuous  eigenvalue\/} for $(X,\,T)$ if there exists a continuous function
$f: X\rightarrow\mathbb S^1$ such that $f\circ T=\lambda f$;  then $f$ is said to
be the corresponding {\it eigenfunction\/} for the system. An
eigenfunction is a factor map from  $X$ into the unit circle; this yields
a factor map onto the unit circle iff $\theta$ is an irrational number.
The set of all eigenvalues of $(X,\,T)$ is called its {\it spectrum,}
denoted $\SP(T)$.  This is sometimes called the {\it continuous spectrum,}
but this term has been pre-empted by operator theory. Every topological
dynamical system has $\lambda=1$ in its spectrum, and if $(X,\,T)$ is a minimal system and ${\rm Sp}(T)=\{1\},$   then $(X,\,T)$ is
weakly mixing.

\begin{defn}\label{oe}
Two Cantor minimal system, $(X,\,T)$ and $(Y,\,S)$,  are {\it orbit
equivalent\/} if there exists a homeomorphism $F:X\rightarrow Y$ such that
$F({\mathcal O}_T(x))={\mathcal O}_S(F(x))$ for all $x\in X$.
\end{defn}

When  two systems are orbit equivalent, it follows from the definitions
that there exists an integer-valued function $n:X\rightarrow{ \mathbb Z}$
such that  for each point $x$ in $X$, $F(T(x))=S^{n(x)}(F(x))$, and
similarly, there exists a map $m: Y\rightarrow \mathbb Z$ such that
$F(T^{m(x)}(x))=S(F(x))$. The maps $m$ and $n$ are called  {\it orbit cocyles}.
Since the systems are minimal, the two cocycles are uniquely defined.
\
\begin{defn}\label{soe}
Two Cantor minimal systems are {\it strongly orbit equivalent\/} if there
exists an orbit equivalence $F$ such that the two orbit  cocycles $m$ and $n$
arising from $F$ have at most one point of discontinuity.
\end{defn}

\begin{defn}
Let $(X,\,T)$ be a Cantor minimal system and  $U$ be a clopen subset of
$X$. The {\it first return}  map $T_U:U\rightarrow U$ is defined by $
T_U(x)=T^{r_U(x)}(x)$ where , $$r_U(x)=\inf\{n\in\mathbb Z^+:\ \
T^n(x)\in U\}.
$$
The new Cantor minimal system, $(U,\,T_U)$, is called the {\it induced
system }of $(X,\,T)$ with respect to  $U$.
\end{defn}

\subsection{Ordered Bratteli diagrams}

A Bratteli diagram is an infinite directed graph which consists of a
vertex set, $V$, and an edge set, $E$, such that $V$, $E$ are   unions of countably
many non-empty finite sets,
$$
V=V_0\dot\cup V_1\dot\cup\cdots;\ \ \ V_0=\{v\}, \ \ \ \ E=E_1\dot\cup
E_2\dot\cup\cdots.
$$
There are two maps, the {\it range} and the {\it source}, $r,s:
E\rightarrow V$, with $r(E_n)\subset V_n$ and $s(E_n)\subset V_{n-1}$. A
vertex $u\in V_n$  is {\it connected\/} to $v\in V_{n+1}$ if there is an
edge $e\in E$ such that $r(e)=u$ and $s(e)=v$.

So for any $n\in\mathbb N$, there is a $|V_n| \times |V_{n-1}|$  incidence matrix $A_n$. Each $a_{ij}$ thus counts the number of edges from
$v_j\in V_n$ to  $v_i\in V_{n+1}$. The diagram obtained by considering the
vertices of $V_n$  as the nodes arranged horizontally in the $n$th level
of a diagram. Then the $i$th node in the $n$th level will be
connected to the $j$th node  in the $n+1$st level by $a_{ij}$ edges.

Let $\{n_k\}_{k=0}^\infty$ be an increasing sequence of natural numbers
with $n_0=0$. We may telescope the diagram along this sequence, obtaining
a new one, by  taking  $V'$ and $E'$ to be the sets of vertices and edges
such that $V'_k=V_{n_k}$, and choosing the incidence matrix between two
consecutive levels $k'$ and $(k+1)'$ to be  $A'_k=A_{n_k}A_{n_k+1}\cdots
A_{n_{k+1}}$.
A Bratteli diagram is {\it simple\/} if it admits a telescoping such that
all the resulting incidence matrices have (strictly) positive entries. The
matrices $A_n$ yield a direct limit partially ordered abelian group, known
as the dimension group (associated to the Bratteli diagram); see below.

\begin{defn}
An {\it ordered Bratteli diagram,} $B(V,\,E,\,\geq)$, is a Bratteli
diagram $(V,\,E)$ with a partial ordering $\geq$ on its edges such that two edges
$e$ and $e'$ are comparable iff $r(e)=r(e')$;  In other words, each set 
$r^{-1}(v),$  $v\in V\setminus V_0,$ is linearly ordered. The edge
with the biggest (least) number in the ordering is called the {\it max
edge\/} ({\it min edge\/}).
\end{defn}
There is an induced ordering on any telescoped diagram. For any two
positive integers $l$ and $k$ with $k<l$, the set $E_{k+1}\circ
E_{k+2}\circ\cdots\circ E_l$ running from  $V_k$ to $V_l$ can be ordered
as follows: $(e_{k+1}, e_{k+2},\dots,e_l)>(f_{k+1}, f_{k+2},\dots,f_l)$
iff there exists some $i$ with $k+1\leq i\leq l$ such that for all
$i<j\leq l$, $e_j=f_j$ and  $e_i>f_i$.

Let $(V,\,E,\,\geq)$ be an ordered Bratteli diagram and $X_B$ denote the
associated infinite path space,
$$
X_B=\{(e_1, e_2,\cdots):\ \ e_i\in E_i,\ r(e_i)=s(e_{i+1});\ \
i=1,2,\dots\}.
$$
Two paths are {\it cofinal\/} if almost all  their edges agree.  The usual
compact topology on the path space $X_B$ has the set of  cylinder sets as
a basis, where cylinder sets are of the form,
$$
U(e_1, e_2,\dots,e_k)=\{(f_1, f_2, \dots)\in X_B:\ \ \ f_i=e_i,\ 1\leq
i\leq k\}.
$$
 Equipped by this topology, $X_B$ is a compact Hausdorff space with a
countable basis consisting of clopen sets; it  is called {\it Bratteli
compactum.} When $(V,\,E)$ is a simple Bratteli diagram, then $X_B$ has
no isolated points, so is a Cantor set. Let $X_B^{max}$ denote the set of
all those elements $(e_1, e_2, \dots)$  in $X_B$ such that each  $e_n$ is
a maximal edge, and define $X_B^{min}$ analogously.  An ordered Bratteli
diagram is  called {\it properly ordered\/} if $(V,\,E)$ is a simple
Bratteli diagram and $X_B^{max}$ and $X_B^{min}$ each contain only one
element; when this occurs, the maximal and minimal paths are  denoted
$x_{max}$ and $x_{min}$ respectively. For any Bratteli diagram, there
exists an ordering which makes it   properly ordered  \cite{HPS}.

Let $(V,\,E,\,\leq)$ be a simple properly ordered Bratteli diagram. The {\it Vershik\/
{\rm(or } adic\/{\rm)} map} is the (minimal) homeomorphism $\varphi_B:
X_B\rightarrow X_B$ wherein    $\varphi_B(x_{max})=x_{min}$, and for any other
point $(e_1, e_2,\dots)\neq x_{max}$,  the map sends the path to its successor \cite{HPS}; in particular,  let $k$ be the smallest number that
$e_k$ is not a max edge, let $f_k$ be the immediate successor of $e_k$,
and then
$\varphi_B(e_1, e_2,\dots)=(f_1, \dots, f_{k-1},f_k,e_{k+1},e_{k+2},\dots)$,
where $(f_1,\dots, f_{k-1})$ is the minimal edge in $E_1\circ
E_2\circ\cdots\circ E_{k-1}$ which has the same source as $f_k$.

Using Kakutani-Rokhlin partitions for  Cantor minimal systems, Herman, Putnam and Skau
proved the following result.

\begin{thm}{\cite{HPS}}
Let $(X,\,T)$ be a Cantor minimal system. Then $T$ is
topologically conjugate to a Vershik map on a Bratteli compactum $X_B$  of
a properly ordered Bratteli diagram $(V,\,E\,\leq)$. Furthermore, given
$x_0\in X$ we may choose the conjugating map $f:X\rightarrow X_B$ so that
$f(x_0)$ is the unique infinite maximal path in $(V,\,E\,\leq)$.
\end{thm}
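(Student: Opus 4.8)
The plan is to realize $(X,\,T)$ as the path space of a properly ordered Bratteli diagram by organizing $X$ into a refining sequence of Kakutani--Rokhlin (KR) partitions adapted to $x_0$, and then to identify $T$ with the Vershik map $\varphi_B$.

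First I would fix a decreasing sequence of clopen sets $Y_1\supseteq Y_2\supseteq\cdots$ contracting to the single point $T(x_0)$; such a sequence exists because $X$, being a Cantor set, has a countable clopen basis. For each $n$, minimality forces the first--return time $r_{Y_n}$ to be a bounded, locally constant function, so writing $Y_n=\bigsqcup_k B_{n,k}$ for its level sets and stacking yields the KR partition
$$
\mathcal P_n=\{\,T^{j}B_{n,k}\ :\ 1\le k\le K_n,\ 0\le j<h_{n,k}\,\},
$$
with base $\bigsqcup_k B_{n,k}=Y_n$, top $\bigsqcup_k T^{h_{n,k}-1}B_{n,k}$, and with $T$ carrying the top back into the base. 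The substantive part of this step is to choose the $Y_n$ and the base partitions $\{B_{n,k}\}$ so that simultaneously: (i) $\mathcal P_{n+1}$ refines $\mathcal P_n$, which I arrange by subdividing each new base set until every new floor $T^{j}B_{n+1,k}$ lies inside a single old floor; (ii) $\bigcap_n Y_n=\{T(x_0)\}$; and (iii) $\bigcup_n\mathcal P_n$ separates points, obtained by intersecting against a fixed sequence of clopen partitions generating the topology of $X$.

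Next I would read off the ordered diagram $(V,\,E,\,\leq)$. I take the towers of $\mathcal P_n$ to be the vertices $V_n$; since each floor of a $\mathcal P_{n+1}$--tower lies in a unique floor of a unique $\mathcal P_n$--tower, a $\mathcal P_{n+1}$--tower passes through the $\mathcal P_n$--towers in a definite order, and I draw one edge per passage, ordering the edges with range a given vertex according to that order of traversal. Minimality makes the diagram simple, because after enough levels every base set meets every tower, so a telescoping has strictly positive incidence matrices. Because the bases contract to the single point $T(x_0)$, the bottom floors contract to $T(x_0)$ and the top floors contract to $x_0=T^{-1}(T(x_0))$, so $X_B^{min}=\{T(x_0)\}$ and $X_B^{max}=\{x_0\}$ are singletons and the diagram is properly ordered.

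Finally I would define $f:X\to X_B$ by sending $x$ to the edge sequence determined at each level $n$ by the floor of the $\mathcal P_n$--tower containing $x$. Property (iii) makes $f$ injective, clopenness of the partition members makes $f$ and $f^{-1}$ continuous, and compactness upgrades this to a homeomorphism onto $X_B$. Checking on floors, applying $T$ to $x$ either advances it one floor (incrementing the lowest non--maximal edge) or, on a top floor, returns it to the base of the successor tower exactly as the Vershik successor prescribes; hence $f\circ T=\varphi_B\circ f$, and by construction $f(x_0)=x_{max}$. I expect the main obstacle to be step one: producing one sequence of KR partitions that is at once refining, generating, and based on sets contracting to a prescribed point, while preserving the roof compatibility (top mapping into the next base) needed both for $\varphi_B$ to be well defined and for proper ordering to hold.
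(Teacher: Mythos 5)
The paper does not prove this theorem; it is quoted from Herman--Putnam--Skau with the explicit remark that the proof uses Kakutani--Rokhlin partitions. Your sketch is precisely that argument --- a nested, generating sequence of KR partitions with clopen bases shrinking to $T(x_0)$, the ordered Bratteli diagram read off from how each tower traverses the towers of the previous level, and the Vershik successor identified with $T$ --- and it is correct in outline and matches the cited proof.
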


\subsection{Dimension groups}
\begin{defn}
A {\it unital\/} (or {\it pointed\/}) partially ordered group is a triple
$(G,\,G_+,\,u)$  where
\begin{itemize}
\item $G$ is an abelian group,
\item $G_+$ is a subset of $G$ such that
$$
G_+\cap (-G_+)=\{0\},\ \ \ G_++G_+\subseteq G_+,\ \ \ G_+-G_+=G,
$$
\item $u$ is an element of $G_+$ such that for any $g\in G$ there exists
$n\in\mathbb Z^+$ such that $(nu-g)\in G_+$.
\end{itemize}
\end{defn}
A partially ordered group is a {\it dimension group} if
\begin{itemize}
\item $(G,G^+)$ is {\it unperforated\/}: if  $g\in G$ and $ng\in G^+$ for
some $n \in \mathbb N$, then $g\in G^+$.
\item $(G,\,G^+)$ satisfies the {\it Riesz interpolation property}: if
$g_1, g_2, h_1, h_2\in G$ and $g_i\leq h_j$, $i, j=1,2$, then there
exists a $g\in G$ such that $g_i\leq g\leq h_j$ for all $i, j$.
\end{itemize}

A subgroup $J$ of the dimension group $G$ is called an {\it order ideal} if $J=J^+-J^+$, where $J^+=J\cap G^+$ with the property that  $0\leq a\leq b\in J$ implies $a\in J$. A dimension group is called {\it simple} if it has no non-trivial order ideal. In a simple dimension group, any $u\in G^+\setminus \{0\}$ is an order unit \cite{HPS}.

\bigskip
A homomorphism $p:G\rightarrow \R$ is a  {\it state} (or {\it normalized  trace}) if $p(G^+)\geq 0$ and $p(u)=1$. The collection of all (normalized) traces on $G$   is a compact convex subset of $\mathbb R^G$  denoted by $S(G,\,u)$. 
\bigskip

Recall that an element $g$ of a partially ordered group $G$ (with an order unit $u$) is {\it infinitesimal\/} if $-\epsilon u\leq g\leq \epsilon u$, for all $0<\epsilon\in\Q$. The set of all infinitesimals of $G$ constitutes the {\it infinitesimal subgroup\/} of $G$, denoted $\Inf(G)$.

The {\it rational subgroup of a simple dimension group $(G,\,G_+)$  with  order unit $u$} is 
$$\Q(G,\,u):=\{g\in G:\ \exists p\in\Z,\  n\in \N \ {\rm such\ that}\  pg=nu\}.$$
\bigskip

Let $(G,u)$ be a simple noncyclic dimension group. Recall \cite{EHS} that  the double dual map  $g \mapsto \hat
g$, where $\hat{g}(\tau)=\tau(g)$, is an order preserving  affine representation, $G \rightarrow {\Aff} (S(G,u))$.
\bigskip

For $(G,\,u)$, a countable unperforated partially ordered abelian group with order unit, let $J(G,\,u)$ dente the subgroup of $G$ given by 
$$J(G,\,u)=\{g\in G:\ \hat{g}\ {\rm is\  constant}\}$$
and $I(G,\,u)$ be the subgroup of $\R$ defined as  $$I(G,\,u)=\bigcap_{\tau\in S(G,\,u)}\tau(G).$$ For any $g\in J(G,\,u)$ let  $\Phi(g)$ be the real number given by  $\hat{g}(\tau)$, for any (all) $\tau\in S(G,\,u)$. 
\bigskip

\smallskip

\begin{prop}\label{residual} Let $(G,u)$ be a countable unperforated partially ordered
abelian group with order unit. Then
 $$\{\tau\in S(G,u);\ \ker \tau= \Inf G\}$$ is a dense G${}_{\delta}$ in 
$S(G,u)$.
\end{prop}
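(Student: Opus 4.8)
The plan is to exhibit the target set as a countable intersection of open dense subsets of $S(G,u)$ and then invoke the Baire category theorem. First observe that $\Inf G\subseteq\ker\tau$ for \emph{every} $\tau\in S(G,u)$: if $-\tfrac1n u\le g\le\tfrac1n u$ for all $n$, then $|\tau(g)|\le\tfrac1n$ for all $n$, so $\tau(g)=0$. Consequently the condition $\ker\tau=\Inf G$ is equivalent to $\ker\tau\subseteq\Inf G$, and therefore
\[
\{\tau\in S(G,u):\ \ker\tau=\Inf G\}=\bigcap_{g\in G\setminus\Inf G}\{\tau\in S(G,u):\ \tau(g)\neq 0\}.
\]
Since $S(G,u)$ is compact (as recalled above) and, $G$ being countable, metrizable, it is a Baire space, while $G\setminus\Inf G$ is countable. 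Thus it suffices to show that each $U_g:=\{\tau:\ \tau(g)\neq 0\}$ is open and dense.

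Openness is immediate from continuity of the evaluation $\tau\mapsto\tau(g)=\hat g(\tau)$. For density I would use the following soft fact about convexity: a continuous affine function $f$ on a convex set $K$ that vanishes on a nonempty relatively open subset must vanish identically (move along segments issuing from a zero in that open set and use affinity). Applied to $f=\hat g$ on $K=S(G,u)$, the complement of $U_g$, namely $\{\hat g=0\}$, has empty interior as soon as $\hat g\not\equiv 0$ on $S(G,u)$; equivalently, $U_g$ is dense provided \emph{some} state does not annihilate $g$. Hence the whole proposition reduces to the separation statement $\Inf G=\bigcap_{\tau}\ker\tau$, i.e. that every $g\notin\Inf G$ is detected by a state.

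This separation step is the crux, and it is where unperforation enters. Given $g\notin\Inf G$, there is $n\in\N$ with $ng\not\le u$ or $-ng\not\le u$; replacing $g$ by $-g$ if necessary, set $h:=ng$ so that $h\not\le u$. I claim $\overline p(h):=\inf\{m/k:\ k\in\N,\ m\in\Z,\ kh\le mu\}\ge 1$. Indeed, if $kh\le mu$ with $m\le k-1$, then $kh\le(k-1)u$, so $k(u-h)\ge u\ge 0$, and unperforation forces $u-h\in G^+$, contradicting $h\not\le u$; hence every admissible pair satisfies $m/k\ge 1$. The functional $\overline p$ is finite and sublinear on the (again by unperforation) torsion-free group $G$, with $\overline p(u)=1$, so a Hahn--Banach argument in the style of Goodearl--Handelman produces a state $\tau$ dominated by $\overline p$ with $\tau(h)=\overline p(h)\ge 1$; then $\tau(g)=\tau(h)/n\neq 0$, as required. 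The main obstacle is precisely this Hahn--Banach construction of a separating state together with the estimate $\overline p(h)\ge 1$, which genuinely uses unperforation; once it is in hand, the Baire-category assembly and the affine-vanishing lemma are routine.
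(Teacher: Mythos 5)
Your argument is correct and follows the same route as the paper's proof: write the set as $\bigcap_{g\in G\setminus\Inf G}U_g$ with $U_g=\{\tau\in S(G,u):\tau(g)\neq 0\}$, observe each $U_g$ is open, obtain density from convexity of $S(G,u)$ together with the existence of some state not annihilating $g$, and conclude by Baire category in the compact space $S(G,u)$ (your affine-vanishing lemma is exactly the paper's perturbation $\phi_n=(1/n)\sigma+(1-1/n)\tau$, stated contrapositively). The only difference is that the paper simply cites Glasner--Weiss for the separation fact that every $g\notin\Inf G$ is detected by a state, whereas you reprove it via the Goodearl--Handelman state-extension machinery, correctly isolating unperforation as the point forcing $\overline{p}(h)\geq 1$; this makes your write-up more self-contained but does not change the structure of the proof.
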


\begin{proof}
For each $g \in G \setminus \Inf G$, define $U_g := \Set{\sigma \in
S(G,u)}{\sigma(g) \neq 0} = (\hat g^{-1}({0}))^c$. This is clearly open in
$S(G,u)$, and is also dense: take any element $\tau\in S(G,\,u)$ with $\tau(g)=0$, since $g \not\in \Inf G$, there exists $
\sigma \in S(G,u)$ such that  $\sigma(g)\neq 0$ \cite{GW}; then $\phi_n :=
(1/n)
\sigma + (1-1/n)\tau \to \tau$, and $\phi_n (g)
\neq 0$, so $\phi_n \in U_g$.

As $S(G,u)$ is compact,  $\cap_{g \in G \setminus {\rm Inf} G} U_g$ is a
dense
G${}_{\delta}$, and this is precisely the set of traces $\tau$ with $\ker\tau = {\Inf} G$.
\end{proof}

\bigskip

\begin{cor}\label{intersection}
 Let $(G,u)$ be a countable unperforated partially ordered abelian group
with order unit $u$. Then
$$\bigcap_{\tau \in S(G,u)} \tau(G) = \Set{\lambda \in \R}{\exists g \in G\
{ {\rm such\ that}\ } \ \hat g = \lambda \pmb 1}.
$$
\end{cor}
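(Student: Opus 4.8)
The statement asserts an equality of two subsets of $\R$, so I would prove each inclusion separately. Write $I(G,u):=\bigcap_{\tau\in S(G,u)}\tau(G)$ for the left-hand side and $\Lambda:=\{\lambda\in\R:\ \exists g\in G,\ \hat g=\lambda\pmb 1\}$ for the right-hand side (here $\pmb 1$ is the constant function $1$ on $S(G,u)$, which is nonempty since $u$ is an order unit). The inclusion $\Lambda\subseteq I(G,u)$ is immediate: if $\hat g=\lambda\pmb 1$ then $\tau(g)=\lambda$ for every $\tau\in S(G,u)$, so $\lambda\in\tau(G)$ for each such $\tau$, whence $\lambda\in I(G,u)$. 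All the content lies in the reverse inclusion, and the main obstacle there is one of \emph{uniformization}: membership $\lambda\in I(G,u)$ only tells us that for each trace $\tau$ there is \emph{some} witness $g_\tau\in G$ with $\tau(g_\tau)=\lambda$, and we must produce a single $g\in G$ that works simultaneously for \emph{all} $\tau$.

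To overcome this, I would fix $\lambda\in I(G,u)$ and for each $g\in G$ set $A_g:=\hat g^{-1}(\{\lambda\})=\{\tau\in S(G,u):\tau(g)=\lambda\}$. Since $\hat g$ is just evaluation at $g$ in the product topology of $\R^G$, it is continuous, so each $A_g$ is closed in $S(G,u)$. The defining property of $I(G,u)$ says exactly that every $\tau$ lies in at least one $A_g$, so $\{A_g\}_{g\in G}$ is a \emph{countable} closed cover of $S(G,u)$, using here that $G$ is countable. As $S(G,u)$ is compact Hausdorff and therefore a Baire space, not every $A_g$ can be nowhere dense; hence some $A_{g_0}$ has nonempty interior, i.e. there is a nonempty relatively open set $W\subseteq S(G,u)$ on which $\hat{g_0}$ is identically equal to $\lambda$.

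The final step upgrades ``constant on an open set'' to ``constant everywhere,'' exploiting that $\hat{g_0}$ is an affine function and $S(G,u)$ is convex. Choosing $p\in W$ and an arbitrary $\sigma\in S(G,u)$, the segment $t\mapsto(1-t)p+t\sigma$ stays inside the open set $W$ for all sufficiently small $t>0$ by continuity, and affinity of $\hat{g_0}$ gives $(1-t)\lambda+t\,\hat{g_0}(\sigma)=\hat{g_0}\big((1-t)p+t\sigma\big)=\lambda$, which forces $\hat{g_0}(\sigma)=\lambda$. Since $\sigma$ was arbitrary, $\hat{g_0}=\lambda\pmb 1$, so $\lambda\in\Lambda$ and the inclusion $I(G,u)\subseteq\Lambda$ follows.

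I expect the Baire-category step to be the crux, since it is what converts the pointwise witnesses into one element constant on an open set; neither it nor the affine rigidity argument suffices alone. I should note that the preceding Proposition~\ref{residual} is not strictly needed for this argument, although it offers an alternative route to the last step: since the faithful traces form a dense set, one could try to pin down $\hat{g_0}$ there, but the affine-rigidity argument is cleaner and avoids any appeal to the structure of $\Inf G$. The only routine points to verify carefully are the continuity of each $\hat g$ and the claim that the segment into $W$ remains in $W$ for small $t$, both of which are straightforward.
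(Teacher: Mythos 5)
Your proof is correct, but it takes a genuinely different route from the paper's. The paper first establishes (Proposition \ref{residual}) that the traces $\tau$ with $\ker\tau=\Inf G$ form a dense G${}_\delta$, picks one such $\tau$ together with a witness $g$ satisfying $\tau(g)=\alpha$, and then, assuming $\hat g$ is not constant, chooses $\sigma$ with $\sigma(g)\neq\alpha$ and forms the convex combination $\phi=\beta\tau+(1-\beta)\sigma$ with $\beta$ picked so that $\{1,\beta\}$ is linearly independent over the countable subfield of $\R$ generated by $\tau(G)\cup\sigma(G)$; any witness $h$ with $\phi(h)=\alpha$ is then forced to satisfy $\tau(h)=\sigma(h)=\alpha$, and since $\ker\tau=\Inf G$ this gives $g-h\in\Inf G$ and hence $\sigma(g)=\alpha$, a contradiction. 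Your argument replaces both ingredients --- the faithful trace and the linear-independence trick --- by a single Baire-category step on the compact state space: the sets $A_g=\hat g^{-1}(\{\lambda\})$ form a countable closed cover, so some $\hat g_0$ equals $\lambda$ on a nonempty open set, and affineness of $\hat g_0$ together with convexity of $S(G,u)$ propagates the constancy to all of $S(G,u)$. Both proofs use countability of $G$ essentially (yours for the countable cover, the paper's to keep the field $F$ countable so that $\beta$ exists) and both exploit convexity of the state space, but yours is more self-contained: it needs neither Proposition \ref{residual} nor the fact from \cite{GW} that non-infinitesimal elements are separated by states, and it applies verbatim to any countable ordered abelian group with order unit and nonempty state space. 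The two routine points you flagged (continuity of each $\hat g$ for the topology of pointwise convergence, and the segment $(1-t)p+t\sigma$ staying in $W$ for small $t>0$) are indeed straightforward and correctly handled.
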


\begin{proof}
The right side is clearly contained in the left. Suppose $\alpha$ belongs
to the left side but not the right. By Proposition \ref{residual}, there exists a trace
$\tau \in S(G,u)$ such that
$\ker \tau = {\rm Inf} G$.
There exists $g \in G$ such that $\tau(g) = \alpha$. Since $\alpha$ does not
belong to the right side, $\hat g$ is not constant, so there exists
$\sigma \in S(G,u)$ such that  $\sigma(g) \neq \alpha$.

Let $F$ be the subfield of $\R$ generated by $\tau(G) \cup \sigma(G)$;
this is countable, so there exists $\beta $ in the open interval $(0,1)$
such that  $\{1,\beta\}$ is linearly independent over $F$. Set $\phi = \beta
\tau + (1-\beta)\sigma$. As $\alpha$ belongs to the left side, there
exists $h \in G$ such that  $\phi(h) = \alpha$. This yields $\alpha = \beta
\tau(h) + (1-\beta)\sigma (h)$, whence $(\tau (h) - \sigma(h))\beta +
(\sigma(h) - \alpha) = 0$. As $\sigma(h), \tau(h), $ and $\alpha =
\tau(g)$ belong to $F$, we deduce $\tau(h) = \sigma(h) = \alpha$.

Thus $\tau(g) = \tau(h)$, so $g - h \in {\rm Inf} G$. Hence $\sigma(g)=
\sigma(h)$, and the latter equals $\alpha$, contradicting $\sigma(g) \neq
\alpha$.
\end{proof}

\bigskip
\begin{cor}\label{exact}
If $(G,\,u)$ is a simple dimension group, then  we have the following short exact sequence:
$$0\longrightarrow \Inf(G)\longrightarrow J(G,\,u) \stackrel  {\Phi} {\longrightarrow}I(G,\,u) \longrightarrow 0.$$
\end{cor}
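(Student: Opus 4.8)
The plan is to verify that $\Phi$ is a well-defined group homomorphism whose kernel is exactly $\Inf(G)$ and whose image is exactly $I(G,\,u)$; exactness at the three spots then follows immediately. Well-definedness is already built into the definition of $\Phi$, since for $g\in J(G,\,u)$ the affine function $\hat g$ is constant, and additivity of traces gives $\widehat{g+h}=\hat g+\hat h$, so that $\Phi$ is a homomorphism into $\R$.

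For exactness at $J(G,\,u)$ I would identify $\ker\Phi$ with $\Inf(G)$. One inclusion is immediate: if $g\in\Inf(G)$, then $-\eps u\le g\le\eps u$ for every rational $\eps>0$, so applying any $\tau\in S(G,\,u)$ gives $-\eps\le\tau(g)\le\eps$, forcing $\hat g=0$ and hence $g\in\ker\Phi$. For the reverse inclusion, suppose $g\in J(G,\,u)$ with $\Phi(g)=0$, i.e. $\hat g\equiv 0$. Here simplicity enters through the Effros--Handelman--Shen characterization of the order: in a simple dimension group an element $a$ lies in $G^+\setminus\{0\}$ precisely when $\tau(a)>0$ for all $\tau\in S(G,\,u)$. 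Applying this to $\eps u\pm g$ (each of which is nonzero, since $\hat g\equiv 0$ while $\widehat{\eps u}\equiv\eps$, and has all trace values equal to $\eps>0$) yields $\eps u\pm g\in G^+$ for every rational $\eps>0$, whence $-\eps u\le g\le\eps u$ and $g\in\Inf(G)$. Thus $\ker\Phi=\Inf(G)$, giving exactness both at $\Inf(G)$ (the inclusion being injective) and at $J(G,\,u)$.

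It remains to establish surjectivity of $\Phi$ onto $I(G,\,u)$, and this is where Corollary \ref{intersection} does the work. By that corollary, $\lambda\in I(G,\,u)=\bigcap_{\tau}\tau(G)$ if and only if there exists $g\in G$ with $\hat g=\lambda\pmb 1$. Any such $g$ is by definition an element of $J(G,\,u)$ satisfying $\Phi(g)=\lambda$; conversely every element of the image of $\Phi$ has this form. Hence the image of $\Phi$ equals $I(G,\,u)$, which completes the exact sequence.

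The only genuinely nontrivial step is the reverse inclusion $\ker\Phi\subseteq\Inf(G)$, and this is precisely where the hypothesis of simplicity is indispensable: it is the strict-positivity criterion of Effros--Handelman--Shen (valid in simple dimension groups but not in general unperforated ordered groups with order unit) that converts the analytic condition $\hat g\equiv 0$ into the order-theoretic inequalities $-\eps u\le g\le\eps u$. The surjectivity, by contrast, is essentially a restatement of Corollary \ref{intersection}, and the forward inclusion $\Inf(G)\subseteq\ker\Phi$ together with the homomorphism property are routine.
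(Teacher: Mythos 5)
Your proposal is correct and follows essentially the same route as the paper, which simply declares the corollary a straightforward consequence of Corollary \ref{intersection} and the definitions of $J(G,\,u)$, $I(G,\,u)$, and $\Phi$; you have merely filled in the routine verifications (the homomorphism property, the identification $\ker\Phi=\Inf(G)$ via the strict-positivity criterion for simple dimension groups, and surjectivity from Corollary \ref{intersection}). No discrepancies to report.
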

\begin{proof}
The proof is a straightforward consequence of Corollary \ref{intersection} and the definitions of $J(G,\,u)$, $I(G,\,u)$,  and $\Phi$.
\end{proof}
\bigskip

Let $(X,\,T)$ be a Cantor minimal system and $C(X,\,\mathbb Z)$ be the abelian group
of all continuous integer-valued functions on $X$.  Denote by
$\partial_TC(X,\,\mathbb Z)$  the subgroup of all elements, $g$,  in
$C(X,\mathbb Z)$ which can be represented in the form $g=f-f\circ T$ for some $f\in
C(X,\,\mathbb Z)$; an alternative notation is $g = (I-T)f$. Each element
of $\partial_TC(X,\,\mathbb Z)$ is called an {\it integer coboundary.}
Define $K^0(X,\,T)=C(X,\,\mathbb Z)/\partial_TC(X,\,\mathbb Z)$  and set
$K^0_+(X,\,T)$ to be the semigroup of equivalence classes of nonnegative
functions, That is,
$$K^0_+(X,\,T)=\{[f]:\ \  f\geq 0\}.$$ Denote by $[1_X]$,
the equivalence class of the constant function $1$ on $X$.
Then
$$
(K^0(X,\,T),\,K^0_+(X,\,T),\,[1_X])
$$
is a simple non-cyclic dimension group. Moreover, any simple non-cyclic dimension group, $G$, can be realized as $K^0(X,\,T)$ for some Cantor minimal system $(X,\,T)$ \cite[Theorem 6.2] {HPS}.  There is a bijective correspondence between $S(K^0(X,\,T),\,[1_X])$ and $\mathcal M_T(X)$, the space of all invariant measures on $(X,\,T)$. For a Cantor minimal system $(X,\,T)$, we have that
$
\{f:\ \int f\ d\mu=0\  \forall \ \mu\in {\mathcal
M}_T(X)\}
$
is a subgroup of $C(X,\,\Z)$ containing $\partial_TC(X,\,\Z)$. Then
$${\Inf}(K^0(X,\,T))=\{f:\ \int f\ d\mu=0\ \forall \ \mu\in {\mathcal
M}_T(X)\}/ \partial_TC(X,\,\mathbb
Z)$$ and  $$K^0_m(X,\,T)=C(X,\,\mathbb
Z)/\{f:\ \ \int f\ d\mu=0\ \ \forall \ \mu\in {\mathcal
M}_T(X)\}.
$$ Indeed,
$K^0_m(X,\,T)=K^0(X,\,T)/{\Inf}(K^0(X,\,T))$, and both $K^0(X,\,T)$ and $K^0_m(X,\,T)$ are  simple dimension groups (with the obvious orderings), the latter with  order unit $[1_X]$.

\begin{thm}{\cite{GPS}}\label{orbit}
Two Cantor minimal systems, $(X,\,T)$ and $(Y,\,S)$,  are orbit equivalent
iff $$(K^0_m(X,\,T), K^0_m(X,\,T)^+,[1_X])\iso(K^0_m(Y,\,S),\,K^0_m(Y,\,S)^+,\,[1_Y]).$$
\end{thm}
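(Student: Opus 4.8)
The plan is to prove the two implications separately, the forward direction being essentially measure-theoretic and the converse carrying all the weight.

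First I would treat the forward direction. Suppose $F\colon X\to Y$ is an orbit equivalence. The essential point is that a Borel probability measure is $T$-invariant precisely when it is invariant under the orbit equivalence relation $\mathcal{R}_T=\{(x,T^nx):x\in X,\ n\in\Z\}$, and that $F$ conjugates $\mathcal{R}_T$ to $\mathcal{R}_S$; hence pushforward $\mu\mapsto F_*\mu$ is a well-defined affine homeomorphism $\mathcal{M}_T(X)\to\mathcal{M}_S(Y)$. Because $F$ is a homeomorphism, $U\mapsto F(U)$ carries clopen sets to clopen sets and satisfies $\mu(U)=(F_*\mu)(F(U))$ for every invariant $\mu$. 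Recalling that $K^0_m$ is exactly $C(X,\Z)$ modulo the functions with vanishing integral against all invariant measures, the assignment $[1_U]\mapsto[1_{F(U)}]$ therefore preserves every measure value, and so extends to a group isomorphism $K^0_m(X,T)\to K^0_m(Y,S)$ sending $K^0_m(X,T)^+$ onto $K^0_m(Y,S)^+$ and $[1_X]$ to $[1_Y]$.

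The converse is the substantial direction, and I would follow the Bratteli--Vershik route. Using the theorem of Herman--Putnam--Skau I would represent $(X,T)$ and $(Y,S)$ as Vershik maps on properly ordered Bratteli compacta. The orbit equivalence relation of a Cantor minimal system, after deleting the $\varphi_B$-orbit of the distinguished maximal path $x_{\max}$, coincides with \emph{tail} (cofinality) equivalence on the Bratteli compactum, and the complete invariant of such an AF-equivalence relation is its dimension group. Thus a genuine isomorphism of the full groups $K^0$ would already match the tail relations off a single orbit, which is precisely strong orbit equivalence. The task here is weaker: I have only an isomorphism $\theta\colon K^0_m(X,T)\to K^0_m(Y,S)$, that is, a matching of the data visible through the invariant measures, with the infinitesimal part $\Inf(K^0)$ discarded.

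The strategy for realizing $\theta$ is to build, by induction, nested sequences of Kakutani--Rokhlin partitions of $X$ and $Y$ whose tower data are matched by $\theta$ at the level of measures, and to use these to define a homeomorphism $F$ carrying $T$-orbits onto $S$-orbits. At each stage $\theta$ prescribes how the measures of the towers must correspond; the density result of Proposition \ref{residual}, which guarantees a trace with kernel exactly $\Inf(K^0)$, lets me control these measures uniformly and pass to the limit. The main obstacle is precisely the invisibility of $\Inf(K^0)$ to $\theta$: because the two tail relations need not themselves be isomorphic, the combinatorial matching of the partitions cannot be made exact, and the residual infinitesimal discrepancy must be absorbed. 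This is exactly what forces a \emph{second} point of discontinuity in the orbit cocycles---the feature distinguishing orbit equivalence from strong orbit equivalence---and the heart of the argument is the absorption lemma showing that this infinitesimal mismatch can always be concealed along the orbit of one extra point, without destroying minimality or the orbit-preserving property of the limiting homeomorphism.
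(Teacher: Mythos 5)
The paper offers no proof of Theorem \ref{orbit}; it is imported verbatim from \cite{GPS}, so your attempt can only be measured against the original argument there. Your forward direction is correct and essentially complete: an orbit equivalence conjugates the two orbit relations, hence induces an affine bijection $\mathcal M_T(X)\to\mathcal M_S(Y)$, and since $K^0_m$ is precisely the image of $C(X,\Z)$ in the space of affine functions on the invariant measures, the assignment $[1_U]\mapsto[1_{F(U)}]$ is well defined and gives a unital order isomorphism. That is the standard easy half.

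The converse is where the theorem lives, and there your proposal names the difficulty rather than resolving it. Two concrete gaps. First, the assertion that an isomorphism of the full groups $K^0$ ``would already match the tail relations off a single orbit, which is precisely strong orbit equivalence'' is itself Theorem \ref{storbit}, whose proof in \cite{GPS} requires Krieger's classification of minimal AF equivalence relations \emph{plus} a delicate gluing argument at the maximal and minimal paths; it is not a free consequence of the Bratteli--Vershik representation. Second, and more seriously, the ``absorption lemma'' that is supposed to conceal the infinitesimal mismatch along one extra orbit is exactly the hard content of the orbit equivalence theorem, and you give no mechanism for it. An isomorphism $\theta$ of $K^0_m$ only constrains tower data through the invariant measures; two Kakutani--Rokhlin towers whose clopen pieces have equal measure for every invariant measure can still differ by a nonzero infinitesimal class, and there is no canonical clopen-set-level correction. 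Making these errors telescope to something supported on a single orbit, while the approximating maps converge to a homeomorphism that still carries orbits to orbits, is the bulk of the GPS proof (which in fact routes through the equivalent condition that some homeomorphism $X\to Y$ carries $\mathcal M_T(X)$ onto $\mathcal M_S(Y)$, followed by a long approximation argument). Proposition \ref{residual} does not substitute for this: a single trace with kernel $\Inf(K^0)$ cannot control the matching against all invariant measures simultaneously. As written, the converse is a plausible outline of the statement, not a proof of it.
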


\begin{thm}\label{storbit}
{\cite{GPS}} Two Cantor minimal systems, $(X,\,T)$ and $(Y,\,S)$,  are
strongly orbit equivalent iff
$$
(K^0(X,\,T),
K^0(X,\,T)^+,[1_X])\iso(K^0(Y,\,S),\,K^0(Y,\,S)^+,\,[1_Y]).
$$
\end{thm}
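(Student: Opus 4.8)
The plan is to prove the two implications separately. The substantive content lies in realizing both systems as Bratteli--Vershik systems and translating an ordered dimension-group isomorphism into a concrete orbit equivalence; the reverse implication is comparatively soft and proceeds by tracking coboundaries through the conjugating homeomorphism.

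\medskip

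\emph{Strong orbit equivalence implies isomorphism of dimension groups.} Suppose $F\colon X\to Y$ is a strong orbit equivalence, with orbit cocycles $n\colon X\to\Z$ and $m\colon X\to\Z$ (each continuous off a single point) satisfying $F\circ T=S^{n}\circ F$ and $S\circ F=F\circ T^{m}$, where $T^{m}$ denotes the map $x\mapsto T^{m(x)}(x)$. Since $F$ is a homeomorphism, the pullback $F^{*}\colon C(Y,\Z)\to C(X,\Z)$, $g\mapsto g\circ F$, is a group isomorphism carrying nonnegative functions to nonnegative functions and $1_Y$ to $1_X$. Writing $h=g\circ F$, the cocycle identity yields $F^{*}(g-g\circ S)=h-h\circ T^{m}$. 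For every invariant measure this integrates to zero, because an orbit equivalence matches the invariant measures of the two systems; hence $F^{*}$ always descends modulo the measure-zero kernel to an order isomorphism of the $K^0_m$ groups, which is exactly orbit equivalence (Theorem \ref{orbit}). The additional input I would use is that, when $m$ is continuous except at one point, $h-h\circ T^{m}$ is in fact an honest integer $T$-coboundary, so $F^{*}$ descends all the way to an order-unit isomorphism $K^0(Y,S)\to K^0(X,T)$.

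\medskip

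\emph{Isomorphism of dimension groups implies strong orbit equivalence.} This is the harder direction. By the Herman--Putnam--Skau theorem quoted above I may assume $(X,T)$ and $(Y,S)$ are the Vershik systems of properly ordered Bratteli diagrams $B$ and $B'$, whose dimension groups are the inductive limits of the incidence matrices, with order unit the top vertex. Given an order-unit isomorphism $\phi\colon K^0(X,T)\to K^0(Y,S)$, I would invoke the standard intertwining lemma for inductive limits of ordered abelian groups: after telescoping $B$ and $B'$ along suitable subsequences of levels, $\phi$ and $\phi^{-1}$ are implemented by an infinite commuting ladder of positive (nonnegative integer matrix) maps running back and forth between the two diagrams. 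Reading this ladder as a single interleaved diagram identifies the Kakutani--Rokhlin tower structures of the two systems level by level: the vertices at stage $k$ index the towers, and the matrix entries prescribe how the towers of one system are assembled from those of the other. Matching the corresponding clopen tower pieces then produces a homeomorphism $F\colon X\to Y$ carrying each $T$-orbit into an $S$-orbit.

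\medskip

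The main obstacle is this last step, namely controlling the discontinuities of the resulting cocycles. The interleaving respects the dimension group but not the edge orderings that define the Vershik maps, so the naive identification of path spaces intertwines $T$ and $S$ only away from the finitely many maximal and minimal paths. The heart of the argument is therefore an absorption procedure: one adjusts the orderings, equivalently reshuffles a finite amount of the tower data at each level, so that $F$ transports the $T$-orbit of every point except the unique maximal path continuously to the corresponding $S$-orbit, forcing both cocycles to be continuous off a single point. Showing that this exceptional point can always be confined to exactly one point—precisely the dividing line between strong orbit equivalence and orbit equivalence—is where the proper-ordering hypothesis ($X_B^{max}$ and $X_B^{min}$ singletons) and careful bookkeeping of the order units do the decisive work.
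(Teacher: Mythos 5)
This theorem is quoted from \cite{GPS}; the paper supplies no proof of its own, so your proposal can only be measured against the known argument of Giordano--Putnam--Skau. Your outline does reproduce the correct architecture of that argument (pullback of coboundaries in one direction; Bratteli--Vershik models plus Elliott-style intertwining of the diagrams in the other), but as a proof it has genuine gaps at precisely the two points where all the difficulty lives, and in both places you name the needed fact rather than establish it.

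In the forward direction, everything rests on the assertion that if $m$ is continuous off a single point then $h-h\circ T^{m}$ is an integer $T$-coboundary for every $h\in C(Y,\Z)$. This is exactly the key cocycle lemma of \cite{GPS}, and it is not a formal consequence of anything you wrote: writing $h(x)-h(T^{m(x)}x)=\sum_{j=0}^{m(x)-1}(\partial_{T}h)(T^{j}x)$ shows it is an orbit sum of a coboundary with variable length, which for a general continuous cocycle $m$ is \emph{not} an integer coboundary --- that failure is the whole reason orbit equivalence only preserves $K^{0}_{m}$ while strong orbit equivalence preserves $K^{0}$. So the one-point-of-discontinuity hypothesis must enter through a substantive argument (in \cite{GPS} it does, via first-return maps and the Bratteli structure), and your proposal contains no trace of that argument. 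In the backward direction, the intertwining step is standard and correctly described, but the ``absorption procedure'' that reorders the interleaved diagram so that both cocycles end up continuous off exactly one point is the heart of the GPS construction; your text describes what this step must accomplish and why it is delicate, but does not carry it out. As it stands the proposal is an accurate road map of the known proof with its two decisive lemmas left as declarations of intent.
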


\bigskip

\section{Spectra and real coboundaries}

Let$(X,\,T)$ be a  Cantor minimal system. If $E(X,\,T)$ denotes as  in \cite{durrand}, the subgroup of all real numbers $\theta$ that $\exp(2\pi i\theta)\in \SP (T)$, we construct in this section an embedding $\Theta$ of $E(X,\,T)$ into $K^0(X,\,T)$ (Corollary \ref{cor1}) and study properties of the image $\Theta(E(X,\,T))$. 
\bigskip

Recall that a continous function $f:X\rightarrow \R$ is a real $T$-coboundary if $f=g-g\circ T$ for some $g\in C(X,\,\R)$. Real co-boundaries were characterized in \cite{GH} by Gottschalk \& Hedlund and studied in particular in \cite{Ormes1} and \cite{Quas}. 

\bigskip

The following lemma is    well known, but as it plays an important role in Theorem \ref{clopen}, we include a proof.

\begin{lem}
Let $X$ be a totally disconnected compact
space, and suppose that $f:X\rightarrow \mathbb \mathbb \mathbb S^1$ is continuous.
For  every $\epsilon > 0$, there exists a continuous function $ F_\epsilon:
X\rightarrow [-\epsilon,\,1]$ such that
 $f = \exp( 2\pi i F_\epsilon)$.
\end{lem}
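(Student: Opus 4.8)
The plan is to exploit the total disconnectedness of $X$ to split the domain into two clopen pieces on each of which $f$ admits an evident continuous branch of argument, and then to glue these local lifts along the (empty) boundary between clopen sets. Note first that a lift valid for a smaller value of $\epsilon$ automatically has range contained in $[-\epsilon',1]$ for every larger $\epsilon'$, so I may assume $0<\epsilon<1/2$.

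The first step is to isolate the ``seam'' of the branch cut. I would set $A:=f^{-1}(\{1\})$, a closed subset of $X$, and let $V:=\{\exp(2\pi i t):|t|<\epsilon\}$ be the open arc about $1\in\mathbb{S}^1$; then $f^{-1}(V)$ is open and contains $A$. Because $X$ is compact, metric and totally disconnected, it is zero-dimensional, i.e.\ the clopen sets form a basis for its topology. Covering the compact set $A$ by finitely many basic clopen neighbourhoods each contained in $f^{-1}(V)$ and taking their union produces a clopen set $C$ with $A\subseteq C\subseteq f^{-1}(V)$. This clopen separation is exactly where the hypothesis enters, and it is the only nonroutine point of the argument.

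Next I would define $F_\epsilon$ piecewise. On $C$ we have $f(C)\subseteq V$, so there is a continuous branch sending each $x\in C$ to the unique $t\in(-\epsilon,\epsilon)$ with $f(x)=\exp(2\pi i t)$; this is $F_\epsilon|_C$, with values in $(-\epsilon,\epsilon)\subseteq[-\epsilon,1]$. On the complementary clopen set $X\setminus C$ we have $f\neq 1$, so $f$ maps into $\mathbb{S}^1\setminus\{1\}$, on which $t\mapsto\exp(2\pi i t)$, $t\in(0,1)$, is a homeomorphism; composing $f$ with the inverse gives a continuous $F_\epsilon|_{X\setminus C}$ with values in $(0,1)\subseteq[-\epsilon,1]$.

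Finally I would assemble the two halves. Since $C$ and $X\setminus C$ are both open and partition $X$, a function whose restriction to each is continuous is continuous on all of $X$; hence $F_\epsilon$ is continuous, takes values in $[-\epsilon,1]$, and satisfies $\exp(2\pi i F_\epsilon)=f$ by construction on each piece. The main obstacle is producing the clopen set $C$ in the second paragraph; everything afterward is routine, and crucially no matching condition arises at the junction precisely because a clopen set has empty topological boundary.
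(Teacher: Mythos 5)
Your proof is correct, and it takes a genuinely different (and in fact cleaner) route than the paper's. The paper argues locally: around every point $x$ it chooses a branch of the logarithm on a neighbourhood $U_x$, extracts a finite subcover by compactness, refines it to a finite \emph{disjoint} clopen cover $\{V_j\}$ using total disconnectedness, and sets $F_\epsilon=\sum_j \chi_{V_j}E_{x(j)}$. You instead identify the single global obstruction --- the branch cut at $1\in\mathbb S^1$ --- and use zero-dimensionality exactly once, to separate the closed set $f^{-1}(\{1\})$ from the rest by a clopen set $C$ contained in $f^{-1}(V)$; then only two explicit branches are needed, one valued in $(-\epsilon,\epsilon)$ on $C$ and one valued in $(0,1)$ on $X\setminus C$. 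Both arguments rest on the same key fact (a totally disconnected compact Hausdorff space has a basis of clopen sets, so open covers of compact subsets admit clopen refinements), but yours controls the range more transparently: the paper's local branches are only said to land in $[-\delta,1+\eta]$ and it is left implicit why the glued function lands in $[-\epsilon,1]$, whereas your construction visibly lands in $(-\epsilon,1)\subseteq[-\epsilon,1]$. One tiny remark: you invoke metrizability, which is not among the stated hypotheses; it is not needed, since compact Hausdorff together with total disconnectedness already gives zero-dimensionality (and in all applications in the paper $X$ is a Cantor set anyway).
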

\begin{proof}
With suitable choice of analytic branches of the logarithm, for each $x \in X$, there exists a neighbourhood $U_x$ and a continuous function   $ E_x:
U_x\rightarrow [-\delta, 1+\eta]$, $\delta, \eta\geq 0$,  such that $f(y)=\exp(2\pi i  E_x(y))$,  for $y\in U_x$. Then 
compactness of $X$ yields a finite open covering by $\{{U_{x(j)}}\}_j$, and total
disconnectedness yields a finite disjoint clopen covering $\{V_j\}_{j=1}^n$
with $V_j \subseteq U_{x(j)}$. To finish the proof, set  $F_{\epsilon} = \sum_{j=1}^n \chi_{V_j}
E_{x(j)}|U_{x(j)}$.
{\rm such\ that}\
\end{proof}

\bigskip

Let $(X,\,T)$ be a Cantor minimal system; as in \cite{cortez}, denote  by $E(X,\,T)$ the subgroup of real numbers consisting  of all  $\theta$ such that $\exp(2\pi i\theta)\in \SP(T)$.  This is the additive  group of (continuous) eigenvalues.

\begin{thm}\label{clopen}
 Let $(X,T)$ be a  transitive topological dynamical system,
and suppose that  $X$ is totally disconnected.  Then $ \theta\in E(X,\,T)$, $0<\theta<1$,  if and only if  there exists a clopen set $U=U_\theta$ {\rm such\ that}\ $$1_{U_\theta}-\theta\cdot \pmb 1 \ {\rm  is\  a\  real\  coboundary}.$$ Moreover, for every
 $\mu\in {\mathcal M}(X,\,T)$, $\mu(U_\theta) = \theta$.

\end{thm}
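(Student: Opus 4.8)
The plan is to move back and forth between a continuous circle-valued eigenfunction and a real-valued lift of it, using the preceding lemma to confine the lift to a short interval; total disconnectedness of $X$ enters precisely through that lemma, which guarantees a lift with range in $[-\epsilon,1]$ rather than in some interval of length close to $2$.

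For the forward implication, suppose $\exp(2\pi i\theta)\in\SP(T)$, so there is a continuous $f:X\to\mathbb S^1$ with $f\circ T=\exp(2\pi i\theta)f$. Fixing $0<\epsilon<\min(\theta,1-\theta)$, the preceding lemma yields a continuous $F:X\to[-\epsilon,1]$ with $f=\exp(2\pi iF)$. The eigenfunction equation then forces $\exp(2\pi i(F\circ T))=\exp(2\pi i(\theta+F))$, so that $g:=F\circ T-F-\theta$ is a continuous $\Z$-valued (hence locally constant) function. Since both $F$ and $F\circ T$ lie in $[-\epsilon,1]$, we get $g\in[-(1+\epsilon)-\theta,\ 1+\epsilon-\theta]$; the choice $\epsilon<\theta$ pushes the upper endpoint strictly below $1$, while $\epsilon<1-\theta$ pushes the lower endpoint strictly above $-2$, so the only integer values available to $g$ are $-1$ and $0$. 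This range bookkeeping is the delicate point of the whole argument: it is exactly what pins $g$ down to two consecutive integers and thereby produces a single clopen set.

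Now set $U:=g^{-1}(\{-1\})$, which is clopen because $g$ is continuous and integer-valued. Then $g=-1_U$, and rearranging $F\circ T-F-\theta=-1_U$ gives $F-F\circ T=1_U-\theta\cdot\pmb 1$, exhibiting $1_U-\theta\cdot\pmb 1$ as the real coboundary $h-h\circ T$ with $h=F$. For the converse, suppose $1_U-\theta\cdot\pmb 1=h-h\circ T$ for some clopen $U$ and $h\in C(X,\R)$, and put $f=\exp(2\pi ih)$. Then $f\circ T=\exp(2\pi i(h\circ T))=\exp(2\pi i(h-1_U+\theta))=\exp(2\pi ih)\,\exp(-2\pi i\,1_U)\,\exp(2\pi i\theta)$, and since $1_U$ is integer-valued we have $\exp(-2\pi i\,1_U)\equiv 1$, so $f\circ T=\exp(2\pi i\theta)f$. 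Thus $f$ is a continuous eigenfunction and $\theta\in E(X,\,T)$.

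Finally, for the ``moreover'' clause, fix any $\mu\in\mathcal M(X,\,T)$ and integrate the relation $1_U-\theta\cdot\pmb 1=h-h\circ T$ against $\mu$. Since $h$ is continuous on the compact space $X$ it is bounded, hence $\mu$-integrable, and $T$-invariance gives $\int h\circ T\,d\mu=\int h\,d\mu$, so the right-hand side vanishes. Therefore $\mu(U)-\theta=0$, that is $\mu(U_\theta)=\theta$, as claimed.
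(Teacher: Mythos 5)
Your proof is correct and follows essentially the same route as the paper's: lift the eigenfunction via the preceding lemma to $F:X\to[-\epsilon,1]$ with $\epsilon<\min(\theta,1-\theta)$, observe that the resulting integer-valued cocycle is confined to two consecutive values (you use $g=F\circ T-F-\theta\in\{-1,0\}$ where the paper uses $k=-g\in\{0,1\}$, defining the same clopen set $U$), and obtain the converse and the measure statement by exponentiating and integrating the coboundary identity. The only omission is the paper's preliminary normalization $|f|=1$ (relevant only if eigenfunctions are taken $\C$-valued rather than $\mathbb S^1$-valued), which is immaterial here.
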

\begin{proof}
Let $f$ be a continuous eigenfunction with respect to  $\exp (2\pi i \theta$. The function $|f|$ is continuous and  constant on orbits, at least one of
which
is dense.
By replacing $f$ by $f/|f|$ if necessary, we may assume that  $|f| = 1$.
Choose $0 < \epsilon < \min\{\theta,\, 1-\theta\}$ and write $f = \exp (2
\pi i
F_{\epsilon})$ by  the previous lemma. Then for all $x$ in $X$,
there exists an integer $k(x)$ \st $F_\epsilon(x) - F_\epsilon(Tx))
= -\theta + k(x)$.

We have $-\epsilon \leq F_\epsilon(T(x)) \leq 1$ and $\epsilon +
\theta < 1$. Thus
$$ -1< -1+( \theta-\epsilon)
 \leq k(x) \leq \epsilon + F_\epsilon(x) + \theta<2.
$$
But $k(x)$ is an integer-valued function, so $k(x)\in\{0,\,1\}$; in
addition, $k(x)=F_\epsilon(x) - F_\epsilon\circ T(x) + \theta$ is 
continuous. Hence $k = 1_U$ for a clopen subset $U$ of $X$.

For any invariant measure $\mu$
$$
\mu(U)=\int_X 1_U\, d\mu=\int_X k\,d\mu=\int_X (F_\epsilon -
F_\epsilon\circ T + \theta)d\mu=\theta.
$$

The converse is straightforward. If $1_U - \theta\cdot\pmb 1$ is a real
coboundary, 
that is an element of $(1-T)C(X,\mathbb R)$, then $\lambda := \exp 2 \pi i
\theta$ is an eigenvalue of $T$. Explicitly, if $1_U - \theta \cdot \pmb 1 =
(1-T)F$ where $F$ is real-valued and continuous, then we see
$$
f:= \exp 2 \pi i F = \exp 2 \pi i(F\circ T  + 1_U - \theta \cdot\pmb 1)
$$
$$= f \circ T \cdot 1 \cdot \lambda^{-1}. $$
Hence $f \circ T = \lambda f$.

In the case that $\theta = 1/q$, we take    $F = \sum_{j=0}^{q-1}
1_{T^j U} j/q$.
\end{proof}

\bigskip
\begin{rem}\label{affine}
Let $(G,\,u)=(K^0(X,\,T),\,[1_X])$.  With the notation of sections 2, 3, the conclusion of  Theorem \ref{clopen}  can be rephrased as
follows: If $ \theta\in E(X,\,T)\cap (0,\,1)$, then  there exists $g \in G^+$ \st  $\hat g = \theta \cdot \pmb 1$.\end{rem}

\bigskip

Let us say that a dynamical system has {\it sufficiently many measures\/} if every
nonempty open set has
nonzero measure for at least one invariant probability measure. Every
minimal system obviously has this property.

\begin{cor}\label{rational}
Let $(X,T)$ be a  topologically transitive  dynamical system
where $X$ is totally disconnected space, and there are  sufficiently many $T$-invariant measures.  If
$p$ and $q$ are relatively prime positive integers  \st  $\theta =
p/q\in E(X,\,T)\cap (0,\,1)$, then there exists a clopen set $U_\theta$ of $X$ \st $q[1_{U_\theta}] =
p[1_X]$ in $K^0(X,\,T)$.
\end{cor}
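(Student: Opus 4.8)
The plan is to start from the conclusion of Theorem \ref{clopen} applied to $\theta = p/q$, which gives a clopen set $U_\theta$ with $1_{U_\theta} - (p/q)\cdot \pmb 1$ a real coboundary, and to upgrade this into an \emph{integer} coboundary relation after multiplying through by $q$. First I would invoke Theorem \ref{clopen}: since $\theta = p/q \in E(X,\,T)\cap(0,\,1)$ and $X$ is totally disconnected and transitive, there exists a clopen set $U = U_\theta$ such that $1_{U} - (p/q)\cdot\pmb 1 = (1-T)F$ for some continuous real-valued $F$. Multiplying by $q$, I obtain
\begin{equation*}
q\cdot 1_{U} - p\cdot \pmb 1 = (1-T)(qF),
\end{equation*}
so the integer-valued function $g := q\cdot 1_{U} - p\cdot \pmb 1 \in C(X,\,\Z)$ is a \emph{real} coboundary, being $(1-T)$ applied to the real-valued function $qF$.

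The key step is then to promote this from a real coboundary to an integer coboundary, i.e.\ to conclude that $g \in \partial_T C(X,\,\Z)$, so that $[g] = 0$ in $K^0(X,\,T)$, which is exactly the desired identity $q[1_{U_\theta}] = p[1_X]$. For this I would use the Gottschalk--Hedlund type characterization referenced before Theorem \ref{clopen} (\cite{GH}), together with the hypothesis of sufficiently many invariant measures: an integer-valued continuous function that is a real coboundary has all its integrals against invariant measures equal to zero, and for such systems this forces it to be an \emph{integer} coboundary. Concretely, since $g = (1-T)(qF)$, for every invariant measure $\mu$ one has $\int_X g\, d\mu = 0$; the standard fact (as in \cite{Ormes1}, \cite{GH}) that an integer coboundary is characterized among integer-valued functions by having zero integral against every invariant measure---which uses transitivity and sufficiently many measures to control the partial sums of $g$ along a dense orbit and extract a continuous integer transfer function---then yields $g \in \partial_T C(X,\,\Z)$.

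The main obstacle I expect is precisely this real-to-integer coboundary passage: knowing $g$ is a real coboundary does not \emph{a priori} give a continuous \emph{integer}-valued $h$ with $g = h - h\circ T$. The clean way to handle it is to observe that $F$ and $qF$ are tied to the eigenfunction structure; in fact when $\theta = p/q$ one can exhibit the transfer function explicitly, much as in the final paragraph of the proof of Theorem \ref{clopen} where $F = \sum_{j=0}^{q-1} 1_{T^j U}\, j/q$ was used for $\theta = 1/q$. I would adapt that explicit construction: the boundedness and local constancy of the relevant partial sums, combined with transitivity, force the candidate transfer function to be continuous and integer-valued. Once continuity and integrality of the transfer function are established, the identity $q[1_{U_\theta}] = p[1_X]$ in $K^0(X,\,T)$ follows immediately by passing to the quotient $C(X,\,\Z)/\partial_T C(X,\,\Z)$.
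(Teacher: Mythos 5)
Your overall architecture is genuinely different from the paper's. The paper never multiplies the relation $1_{U}-(p/q)\pmb 1=(1-T)F$ by $q$. Instead, for $\theta=1/q$ it proves that $U,\,TU,\,\dots,\,T^{q-1}U$ are pairwise disjoint (by comparing $F-F\circ T^{2}$ with $1_{U}+1_{TU}-2\theta$ and iterating), then uses the sufficiently-many-measures hypothesis to pass from $\mu\bigl(\bigcup_{i}T^{i}U\bigr)=1$ for all invariant $\mu$ to the set-theoretic identity $X=\bigcup_{i=0}^{q-1}T^{i}U$, and finally uses that all $[1_{T^{i}U}]$ coincide in $K^{0}(X,\,T)$ to get $q[1_{U}]=[1_{X}]$; the case $p\neq 1$ is reduced to $p=1$ by passing to $T^{s}$ with $ps\equiv 1\pmod q$. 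Your route, if completed, would be shorter and would not even use the sufficiently-many-measures hypothesis.

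However, the justification you offer for your pivotal real-to-integer step is based on a false principle. You assert that an integer-valued continuous function whose integral against every invariant measure vanishes must be an integer coboundary. That is not true: the quotient of $\{f\in C(X,\,\Z):\ \int f\,d\mu=0\ \forall\,\mu\}$ by $\partial_{T}C(X,\,\Z)$ is precisely $\Inf(K^{0}(X,\,T))$, which is nontrivial for many Cantor minimal systems (the paper itself points to such examples). Gottschalk--Hedlund characterizes real coboundaries by boundedness of partial sums, not by vanishing integrals, so neither of your two suggested justifications closes the step. What saves you is that you know much more than vanishing integrals: you have the explicit identity $q1_{U}-p\pmb 1=(1-T)(qF)$ with $qF$ continuous. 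The correct finish is the mechanism of Lemma \ref{intco}: since $G:=qF$ satisfies $G-G\circ T\in C(X,\,\Z)$, the continuous function $\exp(2\pi iG)$ is $T$-invariant, hence constant by topological transitivity, say equal to $\exp(2\pi it)$; then $h:=G-t\pmb 1$ lies in $C(X,\,\Z)$ and $q1_{U}-p\pmb 1=h-h\circ T$, which is exactly $q[1_{U_{\theta}}]=p[1_{X}]$. With that substitution in place of your stated argument, your proof is complete, and in fact establishes the conclusion without the sufficiently-many-measures hypothesis.
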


\begin{proof}
Suppose that  $\theta = p/q$ belongs to $E(X,\,T)\cap (0,\,1)$.  For $0<\epsilon<1$, set $F:=F_\epsilon$.
First assume that $p=1$. Let $U=U_\theta$  be given by Theorem \ref{clopen}.
 We claim
that $U,\, TU,\, \dots, T^{q-1}U$ are pairwise disjoint.
To prove the claim, we have by the preceding,
 $F-F\circ T=1_U-\theta$ and $F\circ T-F\circ T^2=1_{TU}-\theta$.
Also since $f\circ T^2=\lambda^2 f$, it follows that $F-F\circ
T^2=1_V-2\theta$ for some clopen $V$.
However, $F-F\circ T^2=(F-F\circ T)+(F\circ T-F\circ
T^2)=1_U+1_{TU}-2\theta$, or $1_U+1_{TU}=1_V$; this  forces
$U\cap TU=\emptyset$.
The same reasoning may be applied to any pair of iterates of $T^iU$,
$0\leq i\leq q-1$, and  the claim follows.

By the claim and the fact that for any invariant measure $\mu$, $\mu(T^i
U)=1/q$, we have $\mu(\cup_{i=0}^{q-1} T^i U)=1$. Since
sufficiently many measures exist,  $X=\cup_{i=0}^{q-1} T^i U$.
 As $1_{T^i U}$
are all equivalent in $K^0(X,T)$, it follows that $q[1_U] = [X]$.

If $p \neq 1$, there exists a positive integer $s$ \st $ps \equiv 1
\mod
q$. Then we can apply the previous to $T^{s}$ (we no longer need $T^s$ to
be topologically transitive, since we have already constructed a suitable
logarithm of $f$).

\end{proof}

\bigskip

\begin{lem}\label{intco}
Let $(X,\,T)$ be a topologically transitive Cantor system and  $\theta\in E(X,\,T)$, $0<\theta<1$.  If $f,g$  are two continuous integer-valued functions on $X$ \st  $f-\theta\pmb 1$ and $g-\theta\pmb 1$ are real valued coboundaries, then  $f-g$ is an integer valued coboundary.
\end{lem}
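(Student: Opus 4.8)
The plan is to cancel the $\theta$-terms and reduce the claim to the well-known principle that, over a transitive system, an integer-valued continuous real coboundary is automatically an integer coboundary. First I would write the two hypotheses as $f-\theta\pmb 1=(I-T)F$ and $g-\theta\pmb 1=(I-T)G$ with $F,G\in C(X,\,\R)$, and then subtract them: the offsets $\theta\pmb 1$ cancel, leaving $f-g=(I-T)\psi$ with $\psi:=F-G\in C(X,\,\R)$. Thus $f-g$ is already exhibited as a real coboundary, and the entire content of the lemma is to manufacture an \emph{integer-valued} primitive for it.

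Next I would exploit the fact that $f-g$ is integer-valued. From the identity $\psi-\psi\circ T=f-g\in C(X,\,\Z)$ one gets $\psi\circ T\equiv\psi\pmod{\Z}$, so the composite $\bar\psi:=\psi\bmod\Z$, regarded as a continuous map $X\to\R/\Z\cong\s^1$, is $T$-invariant: $\bar\psi\circ T=\bar\psi$. Fixing a point $x_0$ with dense orbit (available by topological transitivity), invariance yields $\bar\psi(T^n x_0)=\bar\psi(x_0)$ for every $n$, so $\bar\psi$ agrees with the constant value $\bar\psi(x_0)$ on a dense subset of $X$. Since $\bar\psi$ is continuous and $\s^1$ is Hausdorff, the level set $\{x:\bar\psi(x)=\bar\psi(x_0)\}$ is closed, hence equal to all of $X$, and $\bar\psi$ is constant.

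Finally, choosing any lift $\tilde c\in\R$ of that constant value, the function $\phi:=\psi-\tilde c$ is continuous and integer-valued, i.e.\ $\phi\in C(X,\,\Z)$; because constants lie in the kernel of $I-T$, I conclude $(I-T)\phi=(I-T)\psi=f-g$, so $f-g$ is an integer coboundary, as required. The only non-formal step is the passage from $T$-invariance of $\bar\psi$ to its constancy, which is precisely where transitivity enters; I expect this to be the main (indeed the only genuine) obstacle, everything else being cancellation and bookkeeping. Note also that the particular values $\theta$ and the group $E(X,\,T)$ play no role beyond guaranteeing that the two offsets coincide and cancel.
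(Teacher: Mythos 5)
Your proof is correct and is essentially the paper's argument in a slightly streamlined form: the paper observes that $\exp(2\pi iF)$ and $\exp(2\pi iG)$ are eigenfunctions for the same eigenvalue $\lambda=\exp(2\pi i\theta)$ and hence proportional by transitivity, which is exactly your observation that the continuous circle-valued function $\exp\bigl(2\pi i(F-G)\bigr)$ is $T$-invariant and therefore constant. Subtracting a lift of that constant to get an integer-valued primitive of $f-g$ is the same final step as the paper's $h:=F-G-t\pmb 1\in C(X,\,\Z)$.
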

\begin{proof}
Let $\lambda=\exp{2\pi i \theta}$. We have $F=f-\theta\cdot 1+F\circ T$ and  $G=g-\theta\cdot 1+G\circ T$, $F,G\in C(X,\,\R)$ . As $f$ is integer-valued, this yields $$\exp{(2\pi i F)}=\lambda^{-1}\exp{(2\pi iF\circ T)}=\lambda^{-1}\exp{(2\pi i F)}\circ T.$$ Hence $\exp{(2\pi i F)}$ and (with similar arguments) $\exp{(2\pi i G)}$ are two eigenfunctions for $T$ with the same eigenvalue $\lambda$.  As $T$ is transitive, they are proportional; hence  there exists $t\in [0,1)$ \st $\exp{(2\pi i F)}=\exp{(2\pi i t)}\cdot \exp{(2\pi i G)}$. Therefore, $h:=F-G-t\pmb 1\in C(X,\,\Z)$ and  $$\partial_Th=\partial_TF-\partial_TG=\partial_T(F-G)=f-g$$ which proves the lemma.

\end{proof}
\medskip

For $x\in\R$, let us denote by  $\lfloor x\rfloor$ (respectively $\lceil x\rceil$)  the largest (respectively, smallest) integer less  (respectively larger) than $x$ and recall that $\{x\}$ is the fractional part of $x$.
\medskip

\begin{cor}\label{cor1}
Let  $(X,\,T)$ be a Cantor minimal system. For each $\theta\in E(X,\,T)$,  let $\Theta(\theta)$ be defined by 
$$\Theta(\theta)=\left\{
\begin{tabular}{lr}
$\lfloor\theta\rfloor[1_X]+[1_{U_{\{\theta\}}}]$ & {\rm if }\ $ \theta\geq 0$, \\
\vspace{0.2cm}\\
$\lceil\theta\rceil[1_X]+[1_{U_{\{\theta\}}}]$ & {\rm if}\ $\theta<0$.
\end{tabular}%
\right.$$
Then  $ \Theta: E(X,\,T)\rightarrow K^0(X,\,T)$ is  an injective homomorphism.
\end{cor}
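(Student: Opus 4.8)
The plan is to represent $\Theta(\theta)$ by a concrete integer-valued function and to reduce everything to the additivity of the classes $[1_{U_r}]$. For $\theta\in E(X,\,T)$ put $g_\theta=\lfloor\theta\rfloor\,\pmb 1+1_{U_{\{\theta\}}}$, with the convention $1_{U_0}=0$, so that $\Theta(\theta)=[g_\theta]$ and $g_\theta-\theta\cdot\pmb 1=1_{U_{\{\theta\}}}-\{\theta\}\cdot\pmb 1$ is a real coboundary by Theorem~\ref{clopen}. Two preliminary points make this legitimate. First, since $\lambda=1\in\SP(T)$ we have $\Z\subseteq E(X,\,T)$, so $\{\theta\}=\theta-\lfloor\theta\rfloor$ again lies in $E(X,\,T)\cap[0,1)$ and $U_{\{\theta\}}$ is available. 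Second, $\Theta$ is well defined: if $U,U'$ are clopen with $1_U-\{\theta\}\cdot\pmb 1$ and $1_{U'}-\{\theta\}\cdot\pmb 1$ real coboundaries, then Lemma~\ref{intco} gives $1_U-1_{U'}\in\partial_TC(X,\,\Z)$, whence $[1_U]=[1_{U'}]$ in $K^0(X,\,T)$.

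The core of the argument is an additivity statement for $r_1,r_2\in E(X,\,T)\cap(0,1)$: if $r_1+r_2<1$ then $[1_{U_{r_1}}]+[1_{U_{r_2}}]=[1_{U_{r_1+r_2}}]$, and if $r_1+r_2>1$ then $[1_{U_{r_1}}]+[1_{U_{r_2}}]=[1_X]+[1_{U_{r_1+r_2-1}}]$. To prove it I would add the coboundary relations $1_{U_{r_i}}-r_i\cdot\pmb 1=(I-T)F_i$ supplied by Theorem~\ref{clopen}. In the first case the integer-valued function $1_{U_{r_1}}+1_{U_{r_2}}$ then differs from $(r_1+r_2)\cdot\pmb 1$ by a real coboundary, and since $r_1+r_2\in E(X,\,T)\cap(0,1)$, Lemma~\ref{intco} identifies its class with $[1_{U_{r_1+r_2}}]$. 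In the second case one applies the same reasoning to the integer-valued function $1_{U_{r_1}}+1_{U_{r_2}}-1_X$ and the value $r_1+r_2-1\in(0,1)$. The boundary value $r_1+r_2=1$ is settled directly: one may take $U_{1-r_1}=U_{r_1}^{\,c}$, so that $[1_{U_{r_1}}]+[1_{U_{1-r_1}}]=[1_X]$.

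Granting this, the homomorphism property follows by carry bookkeeping. Writing $\delta=\lfloor\{\theta_1\}+\{\theta_2\}\rfloor\in\{0,1\}$, one has $\{\theta_1+\theta_2\}=\{\theta_1\}+\{\theta_2\}-\delta$ and $\lfloor\theta_1+\theta_2\rfloor=\lfloor\theta_1\rfloor+\lfloor\theta_2\rfloor+\delta$. When $\delta=0$ the additivity statement gives $[1_{U_{\{\theta_1\}}}]+[1_{U_{\{\theta_2\}}}]=[1_{U_{\{\theta_1+\theta_2\}}}]$ and the floor terms already match. When $\delta=1$ the additivity statement produces an extra $[1_X]$, which combines with $(\lfloor\theta_1\rfloor+\lfloor\theta_2\rfloor)[1_X]$ to give exactly $\lfloor\theta_1+\theta_2\rfloor[1_X]$; in either case $\Theta(\theta_1)+\Theta(\theta_2)=\Theta(\theta_1+\theta_2)$ (the degenerate cases $\{\theta_i\}=0$ being trivial). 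Injectivity is then immediate: evaluating a trace $\tau\leftrightarrow\mu$ on $g_\theta$ and using the measure clause $\mu(U_{\{\theta\}})=\{\theta\}$ of Theorem~\ref{clopen} yields $\tau(\Theta(\theta))=\lfloor\theta\rfloor+\{\theta\}=\theta$, so $\widehat{\Theta(\theta)}=\theta\cdot\pmb 1$; in the language of Corollary~\ref{exact} this says $\Theta$ maps into $J(G,\,u)$ with $\Phi\circ\Theta=\mathrm{id}_{E(X,\,T)}$, and in particular $\Theta(\theta)=0$ forces $\theta=0$.

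I expect the genuine obstacle to be the additivity statement for the clopen classes, and specifically the appearance and placement of the correction term $[1_X]$ when $r_1+r_2\ge 1$: one must verify that the integer-valued function obtained by summing indicators (after subtracting $1_X$ in the overflow case) really does differ from the appropriate $r\cdot\pmb 1$ by a \emph{real} coboundary with $r\in(0,1)$, so that Lemma~\ref{intco} applies and the class is forced. By contrast, well-definedness and injectivity are essentially formal once Theorem~\ref{clopen} and Lemma~\ref{intco} are in hand.
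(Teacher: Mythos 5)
Your proposal is correct and follows essentially the same route as the paper: well-definedness via Theorem~\ref{clopen} and Lemma~\ref{intco}, additivity split into the no-carry and carry cases for the fractional parts, extension from the positive semigroup to a group homomorphism, and injectivity from the identity $\mu(U_{\{\theta\}})=\{\theta\}$, i.e.\ $\Phi\circ\Theta=\mathrm{id}$. The only (harmless) divergence is in the carry case, where you apply Lemma~\ref{intco} directly to $1_{U_{r_1}}+1_{U_{r_2}}-1_X$, whereas the paper re-derives that step via the eigenfunction $f_1f_2f^{-1}$ being constant; your version is a slight streamlining of the same idea.
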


\begin{proof}
By Theorem \ref{clopen} and Lemma \ref{intco}, $\Theta$ is well-defined. 
Now  we prove that $\Theta:E(X,\,T)^+\rightarrow K^0(X,\,T)^+$ is a semigroup homomorphism. Recall first that if $\theta_1$ and $\theta_2$ are positive, then $\theta_1+\theta_2=\lfloor\theta_1+\theta_2\rfloor+\{\theta_1+\theta_2\}$. So if $0<\{\theta_1\}+\{\theta_2\}<1$, then $\{\theta_1+\theta_2\}=\{\theta_1\}+\{\theta_2\}$ and $\lfloor\theta_1+\theta_2\rfloor=\lfloor\theta_1\rfloor+\lfloor\theta_2\rfloor$. And if $1\leq \{\theta_1\}+\{\theta_2\}<2$ then $\lfloor\theta_1+\theta_2\rfloor=\lfloor\theta_1\rfloor+\lfloor\theta_2\rfloor+1$ and $\{\theta_1+\theta_2\}=\{\theta_1\}+\{\theta_2\}-1$.

\medskip

Therefore, to check that $\Theta:E(X,\,T)^+\rightarrow K^0(X,\,T)^+$ is a semigroup homomorphism, it suffices to consider the following cases.

\bigskip
\begin{enumerate}
\item {\it If  $0\leq\theta_1+\theta_2<1$}. By Theorem \ref{clopen}, there exist  clopen sets $U_{\theta_1}, U_{\theta_2}, U_{\theta_1+\theta_2}$ and continuous real valued functions $F_1, F_2$ and $F$ \st $1_{U_{\theta_1+\theta_2}}=F-F\circ T +(\theta_1+\theta_2)1_X$ and $1_{U_{\theta_i}}=F_i-F_i\circ T$, $i=1,2$. So
$$1_{U_{\theta_1+\theta_2}}-(1_{U_{\theta_1}}+1_{U_{\theta_2}})=(F-F_1-F_2)-(F-F_1-F_2)\circ T$$
and therefore, $1_{U_{\theta_1+\theta_2}}-(1_{U_{\theta_1}}+1_{U_{\theta_2}})\in (\pmb 1-T)C(X,\,\R)$.  So by Lemma \ref{intco},
$[1_{U_{\theta_1+\theta_2}}]=[1_{U_{\theta_1}}]+[1_{U_{\theta_2}}]$ in $K^0(X,\,T)$.
\bigskip

\item {\it If $1\leq\theta_1+\theta_2< 2$}. Set $\theta=\theta_1+\theta_2-1$.
By Theorem \ref{clopen}, there exists clopen sets $U_{\theta}, U_{\theta_j}$, real valued functions $F_j, j=1,2$ and $F$ \st for $j=1,2$, $1_{U_{\theta_j}}-\theta_j\pmb 1=F_j-F_j\circ T$ and $1_{U_{\theta}}-\theta\pmb 1=F-F\circ T$.
Then $f_j=\exp(2\pi i F_j)$, $j=1,2$ and $f=\exp(2\pi i F)$ are eigenfunctions with respect to $\theta_j$ and $\theta$.
As $f_1f_2f^{-1}$ is eigenfunction with respect to the eigenvalue $1$, there exists $t\in\R$ \st $\exp(2\pi i t)f_1f_2f^{-1}=1$ and therefore, $F_1+F_2-F-t\in C(X,\,\Z)$. Then
$$1_{U_{\theta_1}}
+1_{U_{\theta_2}}-1_{U_{\theta}}- 1_X=(\pmb 1 -T)(F_1+F_2-F-t\pmb 1)=(\pmb 1-T)(F_1+F_2-F).$$
Hence, $$[1_{U_{\theta_1}}]+[1_{U_{\theta_2}}]=[1_{U_\theta}+1]=[1_X]+[1_{U_\theta}].$$
\end{enumerate}
\bigskip

Therefore, if $0\leq \{\theta_1\}+\{\theta_2\}<1$, then $\{\theta_1+\theta_2\}=\{\theta_1\}+\{\theta_2\}$ and $\lfloor\theta_1+\theta_2\rfloor=\lfloor\theta_1\rfloor+\lfloor\theta_2\rfloor$ and by case 1) we have
\begin{equation}\label{*}
\Theta(\theta_1+\theta_2)=\Theta(\theta_1)+\Theta(\theta_2).
\end{equation}
If $1\leq \{\theta_1\}+\{\theta_2\}<2$, then $\lfloor\theta_1+\theta_2\rfloor=\lfloor\theta_1\rfloor+\lfloor\theta_2\rfloor+1$ and $\{\theta_1+\theta_2\}=\{\theta_1\}+\{\theta_2\}-1$. Then by case 2), we will get (\ref{*}).
\bigskip

Moreover, $\Theta(0)=[0]$. So we have a semigroup homomorphism  which can be extended to a group homomorphism $\Theta:E(X,\,T)\rightarrow K^0(X,\,T)$.   Theorem \ref{clopen} implies that the kernel of $\Theta$ is trivial.
\end{proof}

\bigskip
If $(X,\,T)$ is a  Cantor minimal system   and $(G,\,u)=(K^0(X,\,T),[1_X])$, let us write (see Section 2,3) to simplify notation, $$J(X,\,T)=J(G,\,u),\ \ \ \ I(X,\,T)=I(G,\,u).$$
Note that  the group $I(X,\,T)$ is denoted  by $I(G,\,G^+,\,u)$  in \cite{cortez}.
\medskip

Then by Section 2.3, Theorem \ref{clopen} and Corollary \ref{cor1}, $E(X,\,T)$ is a subgroup of ${\rm Im}(\Phi)=I(X,\,T)$ and $\Theta(E(X,\,T))\subset J(X,\,T)$. Denoting $\Phi^{-1}(E(X,\,T))$ by $H(X,\,T)$, we have the following corollary.
\bigskip

\bigskip

\begin{cor}\label{cor3}
Suppose $(X,\,T)$ is a Cantor minimal system. Then the short exact sequence of abelian groups, $$\Inf(K^0(X,\,T)){\longrightarrow} H(X,\,T)\stackrel{\Phi}{\longrightarrow}E(X,\,T)$$ splits (positively).
\end{cor}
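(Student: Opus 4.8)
The plan is to show that the map $\Theta$ built in Corollary \ref{cor1} is itself the required positive splitting. Recall that $\Theta\colon E(X,\,T)\to K^0(X,\,T)$ is an injective homomorphism and that we have already noted $\Theta(E(X,\,T))\subseteq J(X,\,T)$, so that $\Phi$ is defined on the image of $\Theta$. Since $0\in E(X,\,T)$ and $\ker\Phi=\Inf(K^0(X,\,T))$ is contained in $\Phi^{-1}(E(X,\,T))=H(X,\,T)$, restricting the exact sequence of Corollary \ref{exact} shows the displayed sequence is exact; the whole content of the corollary is thus to produce a positive section, and $\Theta$ is the natural candidate.

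First I would verify the identity $\Phi\circ\Theta=\mathrm{id}_{E(X,\,T)}$ by a direct computation. Fix $\theta\in E(X,\,T)$ with $\theta\ge 0$, so that $\Theta(\theta)=\lfloor\theta\rfloor[1_X]+[1_{U_{\{\theta\}}}]$. Because $\Phi$ is a homomorphism with $\Phi([1_X])=\tau([1_X])=1$ and, by the measure computation of Theorem \ref{clopen}, $\Phi([1_{U_{\{\theta\}}}])=\tau([1_{U_{\{\theta\}}}])=\mu(U_{\{\theta\}})=\{\theta\}$ for every invariant measure $\mu$, one obtains $\Phi(\Theta(\theta))=\lfloor\theta\rfloor+\{\theta\}=\theta$; the case $\theta<0$ is identical with $\lceil\cdot\rceil$ in place of $\lfloor\cdot\rfloor$. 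This computation does double duty: it shows $\Phi(\Theta(\theta))=\theta\in E(X,\,T)$, hence $\Theta(\theta)\in H(X,\,T)$, so $\Theta$ corestricts to a homomorphism $E(X,\,T)\to H(X,\,T)$, and it shows this corestriction is a section of $\Phi$. Together with the injectivity of $\Theta$ from Corollary \ref{cor1}, this proves the sequence splits.

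Finally I would check that the splitting is positive, that is, that $\Theta$ carries $E(X,\,T)^+=E(X,\,T)\cap[0,\infty)$ into $H(X,\,T)\cap K^0_+(X,\,T)$. For $\theta\ge 0$ the element $\Theta(\theta)=\lfloor\theta\rfloor[1_X]+[1_{U_{\{\theta\}}}]$ is a nonnegative integer combination of the classes of the nonnegative functions $1_X$ and $1_{U_{\{\theta\}}}$, so it lies in $K^0_+(X,\,T)$, and by the previous paragraph it lies in $H(X,\,T)$. Hence $\Theta$ is a positive section. The argument is essentially a bookkeeping assembly of earlier results; the one genuinely load-bearing ingredient, and the place I would be most careful, is the measure identity $\mu(U_\theta)=\theta$ from Theorem \ref{clopen}, since it is precisely what forces $\Phi\circ\Theta$ to be the identity rather than merely an injection.
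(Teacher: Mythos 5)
Your argument is correct and is exactly the route the paper intends: the corollary is stated without proof as an immediate consequence of Corollary \ref{cor1}, Theorem \ref{clopen}, and the observation $\Theta(E(X,\,T))\subset J(X,\,T)$, with $\Theta$ serving as the positive section precisely because $\widehat{\Theta(\theta)}=\theta\cdot\pmb 1$ forces $\Phi\circ\Theta=\mathrm{id}$. Your write-up simply makes explicit the bookkeeping the paper leaves to the reader, including the positivity of $\Theta$ on $E(X,\,T)^+$.
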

\bigskip
The following proposition, already stated in  \cite{GPS} and proved in  \cite{Ormes},  is a consequence of Corollary \ref{rational} and  Lemma \ref{intco}.
\bigskip

\begin{prop}\label{cor2}
Let   $(X,\,T)$ be a Cantor minimal system and let $\Theta$ be the injective homomorphism defined in Corollary \ref{cor1}. Then  $$\Theta(E(X,\,T)\cap\Q)=\Q(K^0(X,\,T),\,[1_X]).$$
\end{prop}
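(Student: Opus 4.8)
The plan is to prove the two inclusions separately: the forward inclusion is a direct computation built on Corollary~\ref{rational}, while the reverse one requires a bridge between the purely algebraic relation defining $\Q(K^0(X,\,T),\,[1_X])$ and the coboundary criterion of Theorem~\ref{clopen}, followed by an argument upgrading equality of affine representatives to equality in the group.

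For $\Theta(E(X,\,T)\cap\Q)\subseteq\Q(K^0(X,\,T),\,[1_X])$, I would take $\theta\in E(X,\,T)\cap\Q$ and write its fractional part in lowest terms as $\{\theta\}=p/q$. Since $\Z\subseteq E(X,\,T)$ (because $1=\exp(2\pi i)\in\SP(T)$ always), we have $\{\theta\}\in E(X,\,T)\cap(0,\,1)$, so Corollary~\ref{rational} yields a clopen set $U_{\{\theta\}}$ with $q[1_{U_{\{\theta\}}}]=p[1_X]$. Multiplying the defining formula $\Theta(\theta)=\lfloor\theta\rfloor[1_X]+[1_{U_{\{\theta\}}}]$ by $q$ then gives $q\,\Theta(\theta)=(q\lfloor\theta\rfloor+p)[1_X]$, an integer multiple of $[1_X]$, which is exactly the membership condition for the rational subgroup (after adjusting signs so that the integer coefficient of $[1_X]$ lies in $\N$). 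The case $\theta<0$ is handled identically using $\lceil\,\cdot\,\rceil$.

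For the reverse inclusion, I would start from $g\in\Q(K^0(X,\,T),\,[1_X])$, so that $pg=m[1_X]$ for some $p\in\N$ and $m\in\Z$, and set $\theta:=m/p$. Writing $g=[f]$ with $f\in C(X,\,\Z)$, the relation says $pf-m\cdot 1_X=h-h\circ T$ for some $h\in C(X,\,\Z)$; dividing by $p$ exhibits $f-\theta\cdot 1_X=(\pmb 1-T)F$ as a \emph{real} coboundary with $F=h/p\in C(X,\,\R)$. Running the computation from the converse half of Theorem~\ref{clopen}, but with the indicator $1_U$ replaced by the integer-valued $f$ (so that $\exp(2\pi i f)\equiv 1$ still makes $\exp(2\pi i F)$ an eigenfunction), shows $\exp(2\pi i\theta)\in\SP(T)$, i.e. $\theta\in E(X,\,T)\cap\Q$. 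It then remains to identify $g$ with $\Theta(\theta)$.

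The crux, and the step I expect to be the main obstacle, is upgrading the equality of affine representatives to an equality in $K^0(X,\,T)$. Applying any trace to $pg=m[1_X]$ gives $\tau(g)=\theta$ for all $\tau$, so $\hat g=\theta\cdot\pmb 1$; and since $\mu(U_{\{\theta\}})=\{\theta\}$ for every invariant $\mu$ by Theorem~\ref{clopen}, also $\widehat{\Theta(\theta)}=\theta\cdot\pmb 1$. Hence $g-\Theta(\theta)\in\ker(\,\hat{\cdot}\,)=\Inf(K^0(X,\,T))$. To force genuine equality I would invoke the triviality $\Q(K^0(X,\,T),\,[1_X])\cap\Inf(K^0(X,\,T))=\{0\}$: if $p'g'=m'[1_X]$ with $p'\neq 0$ and $\hat{g'}=0$, then applying a trace gives $m'=0$, so $p'g'=0$ and thus $g'=0$ by torsion-freeness of dimension groups (a consequence of unperforation). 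Since $\Theta(\theta)$ already lies in the rational subgroup by the forward inclusion, the difference $g-\Theta(\theta)$ lies in this trivial intersection, whence $g=\Theta(\theta)\in\Theta(E(X,\,T)\cap\Q)$, completing the proof.
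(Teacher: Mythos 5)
Your proof is correct. The forward inclusion is exactly the paper's argument: Corollary~\ref{rational} supplies a clopen set $U$ with $q[1_U]=p[1_X]$, and the formula for $\Theta$ then places $\Theta(\theta)$ in the rational subgroup. Where you genuinely diverge is the reverse inclusion, which the paper compresses into ``follows from the definition of $\Q(K^0(X,T),[1_X])$ and Corollary~\ref{cor1}.'' Your first step there --- extracting from $pg=m[1_X]$ the real coboundary $f-\theta\cdot 1_X$ with $F=h/p$ and rerunning the converse half of Theorem~\ref{clopen} with the integer-valued $f$ in place of an indicator --- is precisely the content the paper leaves implicit, and it is right. For the final identification $g=\Theta(\theta)$ you take a detour through the affine representation: you show $g-\Theta(\theta)\in\Inf(K^0(X,T))$ and then kill it using $\Q(K^0(X,T),[1_X])\cap\Inf(K^0(X,T))=\{0\}$ (torsion-freeness from unperforation). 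This works, but the paper's own toolkit gives a shorter path: both $\bigl(f-\lfloor\theta\rfloor 1_X\bigr)-\{\theta\}\cdot 1_X$ and $1_{U_{\{\theta\}}}-\{\theta\}\cdot 1_X$ are real coboundaries of integer-valued functions, so Lemma~\ref{intco} says directly that $f-\lfloor\theta\rfloor 1_X-1_{U_{\{\theta\}}}$ is an \emph{integer} coboundary, i.e.\ $g=\Theta(\theta)$ on the nose, with no appeal to traces or infinitesimals --- this is exactly what the well-definedness of $\Theta$ in Corollary~\ref{cor1} was built on. Your route has the small side benefit of making explicit that the rational subgroup meets the infinitesimals trivially; the Lemma~\ref{intco} route is more economical and is what the authors evidently intended.
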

\begin{proof}
 By Corollary \ref{rational}, if $\theta=p/q\in E(X,\,T)$,  then there exists a  clopen set $U$ \st $q[1_U]=p[1_X]$. So from  the definition of  $\Q(K^0(X,\,T),\,[1_X])$, the result   follows  from Corollary \ref{cor1}.
 \end{proof}
\bigskip

\begin{rem} \label{rat}
For a Cantor minimal system $(X,T)$, the rational spectrum, $E(X,\,T)\cap \Q$ is therefore invariant under strong orbit equivalence. This was already observed in \cite{GPS}, \cite{Ormes}.
\end{rem}
\bigskip

\begin{thm}\label{torsion}
Let $(X,T)$ be a Cantor minimal system and $\Theta$ be as in  Corollary \ref{cor1}. Then $K^0(X,\,T)/\Theta(E(X,\,T))$ is torsion-free.
\end{thm}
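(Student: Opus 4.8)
The plan is to show that if $n\cdot[h]\in\Theta(E(X,\,T))$ for some positive integer $n$ and some $[h]\in K^0(X,\,T)$, then already $[h]\in\Theta(E(X,\,T))$. Writing $n[h]=\Theta(\theta)$ for some $\theta\in E(X,\,T)$, I want to produce an eigenvalue $\theta'\in E(X,\,T)$ with $\Theta(\theta')=[h]$, the natural candidate being $\theta'=\theta/n$. The whole difficulty is therefore concentrated in one point: showing that $\theta/n$ is itself an eigenvalue of $(X,\,T)$ whenever $\theta$ is and whenever $n[h]=\Theta(\theta)$ is $n$-divisible in the dimension group by a class of the special form handled by $\Theta$.

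First I would apply the affine representation. By Remark \ref{affine} together with Corollary \ref{intersection}, the image $\Theta(E(X,\,T))$ sits inside $J(X,\,T)$, and under $\Phi$ we have $\Phi(\Theta(\psi))=\psi$ for every $\psi\in E(X,\,T)$; that is, $\Theta$ is a section of $\Phi$ as recorded in Corollary \ref{cor3}. Applying $\Phi$ (equivalently, the map $g\mapsto\hat g$ read off at any trace, since $\hat{h}$ need not yet be constant) to the relation $n[h]=\Theta(\theta)$ gives $n\,\tau(h)=\theta$ for every trace $\tau\in S(K^0(X,\,T),[1_X])$, because $\widehat{\Theta(\theta)}=\theta\cdot\pmb 1$ is the constant function $\theta$. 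In particular $\hat h$ is the constant $\theta/n$, so $[h]\in J(X,\,T)$ and $\Phi([h])=\theta/n$. Thus the only thing left is to prove that this real number $\theta/n$ actually lies in $E(X,\,T)$, i.e. that $\exp(2\pi i\,\theta/n)\in\SP(T)$; once that is known, $\Theta(\theta/n)$ is defined and has the same $\Phi$-value as $[h]$, so $[h]-\Theta(\theta/n)\in\Inf(K^0(X,\,T))=\ker\Phi$ restricted to $J$, and I would close the argument by showing this infinitesimal difference vanishes.

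The main obstacle is exactly the divisibility step for eigenvalues: deducing $\theta/n\in E(X,\,T)$ from $\theta\in E(X,\,T)$ and the existence of a clopen-built class $[h]$ with $\hat h=(\theta/n)\pmb 1$. The route I would take is to realize $[h]$ (or a representative adjusted by the splitting of Corollary \ref{cor3}) by a genuine clopen indicator function and invoke the \emph{converse} direction of Theorem \ref{clopen}: if $1_{U}-(\theta/n)\cdot\pmb 1$ is a real coboundary for some clopen $U$, then $\exp(2\pi i\,\theta/n)$ is an eigenvalue. So the crux is to upgrade the algebraic relation $\hat h=(\theta/n)\pmb 1$ in $K^0(X,\,T)$ to the \emph{cohomological} statement that a suitable clopen indicator minus $(\theta/n)\pmb 1$ is a real coboundary. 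Here the fractional part $\{\theta/n\}\in(0,1)$ is the relevant fractional value, and one uses that $n[h]=\Theta(\theta)$ means $n$ copies of a clopen function differ from the clopen function witnessing $\theta$ by an integer coboundary, allowing the real coboundary condition for $\theta$ to be descended to one for $\theta/n$.

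Finally I would handle the vanishing of the residual infinitesimal. Having found $\theta/n\in E(X,\,T)$, both $[h]$ and $\Theta(\theta/n)$ lie in $J(X,\,T)$ with equal $\Phi$-image, so their difference is infinitesimal; but by Corollary \ref{cor1} we also know $n\,\Theta(\theta/n)=\Theta(\theta)=n[h]$, so $n\bigl([h]-\Theta(\theta/n)\bigr)=0$. Since $K^0(X,\,T)$ is a dimension group and hence torsion-free as an abelian group, $[h]-\Theta(\theta/n)=0$, giving $[h]=\Theta(\theta/n)\in\Theta(E(X,\,T))$, which proves the quotient is torsion-free. I expect the genuinely substantive work to be the clopen-coboundary descent in the previous paragraph; the torsion-freeness of the ambient group then makes the endgame immediate.
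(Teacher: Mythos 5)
Your strategy is the same as the paper's: reduce to showing $\theta/n\in E(X,\,T)$ by representing the class $[h]$ (with $n[h]=\Theta(\theta)$) as a clopen indicator $[1_V]$, combining the integer coboundary relation $n1_V-\lfloor\theta\rfloor 1_X-1_{U_{\{\theta\}}}\in\partial_TC(X,\,\Z)$ with the real coboundary $1_{U_{\{\theta\}}}-\{\theta\}1_X$, dividing by $n$, and invoking the converse direction of Theorem \ref{clopen}. Your endgame, $n([h]-\Theta(\theta/n))=0$ together with torsion-freeness of the ambient abelian group, is a clean substitute for the paper's direct identification $\Theta(\theta/n)=[1_V]$, and the infinitesimal detour you mention is not actually needed.

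The one genuine gap is the step you pass over with ``realize $[h]$ by a genuine clopen indicator function'': this is exactly where the paper does its real work, and it is not automatic. A positive class in $K^0(X,\,T)$ need not be of the form $[1_V]$ (for instance $2[1_X]$ is not), and nothing in your write-up even establishes that $[h]$ is positive. The paper's route is: (i) since $\Theta(E(X,\,T))$ is totally ordered, one may assume $n[h]\geq 0$, and unperforation then gives $[h]\in K^0_+(X,\,T)\setminus\{0\}$; (ii) if $n\leq\lfloor\theta\rfloor$, write $\lfloor\theta\rfloor=kn+s$ with $0\leq s<n$ and replace $[h]$ by $[h]-k[1_X]$ (harmless, as $k[1_X]=\Theta(k)$), which reduces everything to the case $0<[h]\leq[1_X]$ with $[h]\neq[1_X]$; (iii) only then does the Glasner--Weiss lemma (\cite[Lemma 2.5]{GW}) produce a clopen $V$ with $[h]=[1_V]$. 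Your parenthetical appeal to the splitting of Corollary \ref{cor3} does not supply any of this: that corollary splits $H(X,\,T)$ over $E(X,\,T)$ but says nothing about clopen realizability of a given class. Finally, the case $\theta\in\Q$ (in particular $\theta\in\Z$, where there is no clopen set of measure $\{\theta\}$) is dispatched separately in the paper via Proposition \ref{cor2}; your descent should either be checked in that degenerate case or routed through that proposition.
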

\begin{proof}
Let $g\in K^0(X,\,T)$. Let us show that if $ng\in \Theta(E(X,\,T))$ for some $n\in \N$ then $g$ itself belongs to $\Theta(E(X,\,T))$.
Since $\Theta(E(X,\,T))$ is totally ordered we can assume that $ng$ is positive (otherwise, replace it by $-ng$) and as $K^0(X,\,T)$ is unperforated, $g\in K^0(X,\,T)^+\setminus \{0\}$. By the assumption, there exists $\theta\in E(X,\,T)\cap \R^+$ \st $ng=\Theta(\theta)$. 
\bigskip

If $\theta\in\Q$, then by Proposition \ref{cor2}, $ng\in \Q(K^0(X,\,T),\,[1_X])$, hence $g\in\Q(K^0(X,\,T),\,[1_X])$. By Proposition \ref{cor2} again, $g\in \Theta(E(X,\,T))$.

\bigskip

 We can therefore assume $\theta\notin\Q$ and that there exists a proper clopen set $U$ of $X$ \st  $ng=\lfloor\theta\rfloor[1_X]+[1_{U}]$ for  some clopen subset $U$ \st
$1_{U}-\{\theta\}  1_X$ is a real coboundary. 
\bigskip

Suppose first that $n>\lfloor\theta\rfloor$. Then $$ng=\lfloor\theta\rfloor[1_X]+[1_U]\leq (n-1)[1_X]+[1_U]\leq n[1_X].$$ So, as $K^0(X,\,T)$ is unperforated, $0<g\leq [1_X]$. 
Moreover, $ng\neq n[1_X]$ which yields that $g\neq [1_X]$ and by \cite[Lemma 2.5]{GW}, there exists a proper clopen set $V$ of $X$ \st $g=[1_V]$.
\bigskip

Thus $n1_V-\lfloor\theta\rfloor1_X-1_U=G-G\circ T$, for some $G\in C(X,\,\Z)$. As $\{\theta\}1_X-1_U=F-F\circ T$ for some $F\in C(X,\,\R)$, we have 
$$1_V-{\theta\over n}1_X=1_V-{{\lfloor\theta\rfloor+\{\theta\}}\over n}1_X={F+G\over n}-{F+G\over n}\circ T$$
which by Theorem \ref{clopen} means that $\theta/n\in E(X,\,T)$ and $$\Theta(\theta/n)=[1_V]=g.$$

If $n\leq \lfloor\theta\rfloor$, write $\lfloor\theta\rfloor=k n+s$ with $0\leq s<n$ and $k\geq 1$. As $ng=(kn+s)[1_X]+[1_U]$, we have 
$$n(g-k[1_X])=s[1_X]+[1_U].$$
In particular, $g-k[1_X]\geq 0$ and since $s<n$, the previous case implies that $g-k[1_X]\in \Theta(E(X,\,T))$.
\end{proof}

\bigskip

Let $(X,\,T)$ be a Cantor minimal system with $\Inf(K^0(X,\,T))=\{0\}$. Maintaining our notation, $J(X,\,T)\cong I(X,\,T)$;  by Corollary  \ref{exact}, we can identify $I(X,\,T)/E(X,\,T)$ with $J(X,\,T)/\Theta(E(X,\,T))$.

\begin{cor}[\cite{cortez}, Theorem 1]\label{cort}
Let $(X,\,T)$ be a Cantor minimal system with $\Inf(K^0(X,\,T))=\{0\}$.

Then the quotient group $I(X,\,T)/E(X,\,T)$ is torsion-free.
\end{cor}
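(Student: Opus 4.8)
The plan is to deduce Corollary \ref{cort} directly from the torsion-freeness already established in Theorem \ref{torsion}, using the identification provided in the paragraph immediately preceding the statement. The key observation is that when $\Inf(K^0(X,\,T))=\{0\}$, the short exact sequence of Corollary \ref{exact} collapses: the map $\Phi$ becomes an \emph{isomorphism} $J(X,\,T)\xrightarrow{\sim} I(X,\,T)$, since its kernel $\Inf(K^0(X,\,T))$ is trivial. Under this isomorphism, Corollary \ref{cor1} tells us that $\Phi$ carries $\Theta(E(X,\,T))$ onto $E(X,\,T)$ (recall $\Theta$ is a section of $\Phi$ over $E(X,\,T)$ by Corollary \ref{cor3}, and $\Phi\circ\Theta=\mathrm{id}$). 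Hence $\Phi$ induces a group isomorphism
\begin{equation}\label{plan-iso}
J(X,\,T)/\Theta(E(X,\,T))\;\cong\;I(X,\,T)/E(X,\,T),
\end{equation}
which is exactly the identification asserted in the text.

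So the first step I would take is to record \eqref{plan-iso} explicitly, justifying each claim: that $\Phi$ is injective because its kernel is $\Inf(K^0(X,\,T))=\{0\}$, that it is surjective onto $I(X,\,T)=\mathrm{Im}(\Phi)$ by definition, and that it restricts to a bijection between $\Theta(E(X,\,T))$ and $E(X,\,T)$. The second step is to reduce torsion-freeness of the right-hand quotient to torsion-freeness of the left-hand quotient via the isomorphism \eqref{plan-iso}, so the problem becomes: show $J(X,\,T)/\Theta(E(X,\,T))$ is torsion-free.

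The third step is to derive this from Theorem \ref{torsion}, which states that the \emph{larger} quotient $K^0(X,\,T)/\Theta(E(X,\,T))$ is torsion-free. Since $J(X,\,T)$ is a subgroup of $K^0(X,\,T)$ containing $\Theta(E(X,\,T))$ (recall $\Theta(E(X,\,T))\subset J(X,\,T)$, noted just after Corollary \ref{cor1}), the quotient $J(X,\,T)/\Theta(E(X,\,T))$ embeds as a subgroup of $K^0(X,\,T)/\Theta(E(X,\,T))$. A subgroup of a torsion-free abelian group is torsion-free, so the conclusion follows immediately.

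The only genuine content lies in verifying \eqref{plan-iso} cleanly — everything else is formal — so I anticipate the main (mild) obstacle being the bookkeeping that $\Phi$ maps $\Theta(E(X,\,T))$ \emph{bijectively} onto $E(X,\,T)$ rather than merely surjectively; this is where one invokes that $\Theta$ is a section of $\Phi$ (so $\Phi\restriction_{\Theta(E(X,\,T))}$ is injective) together with the definition $E(X,\,T)=\Phi(\Theta(E(X,\,T)))$ forcing surjectivity. Once that is in hand the corollary is a one-line consequence of Theorem \ref{torsion}, and indeed this is why the hypothesis $\Inf(K^0(X,\,T))=\{0\}$ is exactly what is needed to transfer the unconditional result of Theorem \ref{torsion} to the group $I(X,\,T)/E(X,\,T)$ studied in \cite{cortez}.
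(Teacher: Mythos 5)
Your proof is correct and follows essentially the same route as the paper: both arguments rest on the identification $I(X,\,T)/E(X,\,T)\cong J(X,\,T)/\Theta(E(X,\,T))$ (valid because $\Phi$ is injective when the infinitesimal subgroup vanishes and $\Phi\circ\Theta=\mathrm{id}$ on $E(X,\,T)$) together with Theorem \ref{torsion}. The only difference is cosmetic: the paper transfers torsion-freeness by checking that the torsion elements of $K^0(X,\,T)/\Theta(E(X,\,T))$ and of $J(X,\,T)/\Theta(E(X,\,T))$ coincide (via a small trace computation), whereas you use the slightly more direct observation that $J(X,\,T)/\Theta(E(X,\,T))$ embeds in the torsion-free group $K^0(X,\,T)/\Theta(E(X,\,T))$, which suffices.
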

\begin{proof}
Let $g\in K^0(X,\,T)$. If for some $n\in\N$, there exists $\theta\in E(X,\,T)$ with $ng=\Theta(\theta)$, then for every $\tau\in S(K^0(X,\,T),\,[1_X])$,
$$n\tau(g)=\tau(ng)=\tau(\Theta(\theta))=\theta.$$
Hence $g\in J(X,\,T)$;  so the torsion elements of $K^0(X,\,T)/\Theta(E(X,\,T))$ and of $J(X,\,T)/\Theta(E(X,\,T))$ are the same. By Theorem \ref{torsion} and the above identification, the corollary is proved.
\end{proof}

\bigskip

\begin{rem}
For the identification between $J(X,\,T)/\Theta(E(X,\,T))$ and $I(X,\,T)/E(X,\,T)$ to be true, $\Inf(K^0(X,\,T))$ has to be trivial. In \cite{cortez}, an example of a Cantor minimal system for which neither $\Inf(K^0(X,\,T))$ nor the torsion subgroup of $I(X,\,T)/E(X,\,T)$ are  trivial, is described. 
\end{rem}

\section{Irrational miscibility } 

Let $G$ be a simple dimension group. With  notation and results of Section 2, recall that we have for $u\in G^+\setminus \{0\}$, a short exact sequence
$$0\rightarrow \Inf(G)\longrightarrow J(G,\,u)\stackrel {\Phi}{\longrightarrow }I(G,\,u)\longrightarrow 0$$
where $J(G,\,u)=\{g\in G:\ \hat{g} \ {\rm is\ constant}\ \}$ and $\Phi(g)=\hat{g}$.
\begin{defn} Let $(G,\,u)$ be a non-cyclic simple dimension group.
 We
say that
$(G,u)$ is {\it irrationally miscible } if $\Phi(J(G,\,u))\subseteq \Q$. Moreover,  $G$ is {\it globally irrationally miscible \/} if
for all choices of order units $u$, $(G,u)$ is irrationally miscible .
\end{defn}

The following is an immediate consequence. 

\begin{prop}\label{miscible }
Let $(G,u)$ be a non-cyclic simple dimension group with order unit $u$. Then $(G,\,u)$  is irrationally miscible  iff $\bigcap_{S(G,u)} \tau(G)\subset\Q$.
\end{prop}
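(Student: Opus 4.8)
The plan is to observe that the statement is an almost immediate consequence of Corollary \ref{intersection} together with the very definition of irrational miscibility, so the work reduces to matching up notation. First I would recall that, by definition, $I(G,u)=\bigcap_{\tau\in S(G,u)}\tau(G)$ and that $(G,u)$ being irrationally miscible means $\Phi(J(G,u))\subseteq\Q$, where $\Phi$ sends an element $g\in J(G,u)$ to the constant value of $\hat g$. Thus the two conditions in the biconditional differ only in how the subset $\bigcap_{\tau}\tau(G)$ of $\R$ is described, and the goal is to identify $\Phi(J(G,u))$ with $\bigcap_{\tau}\tau(G)$.

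The key step is to apply Corollary \ref{intersection}, which gives
$$\bigcap_{\tau\in S(G,u)}\tau(G)=\Set{\lambda\in\R}{\exists\, g\in G\ \text{such that}\ \hat g=\lambda\pmb 1}.$$
Here I would note that the right-hand set is exactly $\Phi(J(G,u))$: the condition $\hat g=\lambda\pmb 1$ says precisely that $\hat g$ is the constant function $\lambda$, i.e.\ $g\in J(G,u)$ with $\Phi(g)=\lambda$. Equivalently, this identification is the surjectivity of $\Phi$ onto $I(G,u)$ recorded in the short exact sequence of Corollary \ref{exact}. Before invoking the corollary I would check that its hypotheses are in force: a simple dimension group is unperforated by definition and, as noted in the excerpt, every nonzero element of $G^+$ is an order unit, while countability is part of the standing setting; so Corollary \ref{intersection} applies verbatim.

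With $\Phi(J(G,u))=I(G,u)=\bigcap_{\tau\in S(G,u)}\tau(G)$ in hand, the biconditional becomes tautological: $\Phi(J(G,u))\subseteq\Q$ holds if and only if $\bigcap_{\tau\in S(G,u)}\tau(G)\subseteq\Q$. I do not expect any genuine obstacle here; the only point requiring a moment's care is the bookkeeping that $\Phi(J(G,u))$ and the set appearing on the right-hand side of Corollary \ref{intersection} are literally the same subgroup of $\R$, which is exactly why the proposition is labelled an immediate consequence.
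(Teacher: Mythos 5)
Your proposal is correct and follows exactly the route the paper intends: the paper offers no written proof, calling the proposition "an immediate consequence," and the consequence in question is precisely the identification $\Phi(J(G,u))=\bigcap_{\tau\in S(G,u)}\tau(G)$ furnished by Corollary \ref{intersection} (equivalently, the surjectivity of $\Phi$ in Corollary \ref{exact}), after which the equivalence is a tautology. Your additional check that the hypotheses of Corollary \ref{intersection} hold for a (countable) simple dimension group is appropriate and matches the paper's standing assumptions.
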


\bigskip

\begin{rem}In general, we cannot take the intersection over the set of pure traces, $\bigcap_{\partial_e
S(G,u)} \tau(G)$,  as shown by the following  drastic example.

Let $K$ be the set of real algebraic
numbers; this is a countable subfield of the reals. Equipped with the sums
of squares ordering, this is well-known (and easily proved) to be a simple
dimension group and a partially ordered ring with $1$ as order unit, and
its pure traces with respect to the order unit $1$ are given by $r \mapsto
\gamma(r)$ where $\gamma: K \rightarrow K$ is a Galois automorphism (we
view the second copy of $K$ as a subgroup of the reals).

In particular, for every pure trace $\gamma$ on $(K,1)$, we have
$\gamma(K) = K$ (viewing the latter as a subgroup of $\R$), and thus
$\cap_{\gamma \in \partial_e S(G,u)} \gamma(K) = K$.
However,
$$
\bigcap_{\tau\in {S(K,1)}} \tau(K) = \Q.
$$
It suffices to show $\Set{\lambda \in \R}{\hat g = \lambda \pmb 1} = \Q$.
Select $g \in K$; if $\hat g = \lambda \pmb 1$, then for every Galois
automorphism, we have that $\gamma(g)$ is the same constant, $\lambda$;
hence $g$ belongs to the fixed point subgroup of the Galois group, hence
is a rational number.

If we replace the order unit $1$ by any other order unit, the same
conclusions apply, as is easy to verify, since $K$ is a field. Hence $K$
with the sums of squares ordering is globally irrationally miscible .

Of course, we can obtain a simpler example with  quadratic
squares ordering. There are two pure traces, each has range $K$ itself,
but the intersection over all the traces of their images is just $\Q$, via
the same argument. The infinite-dimensional example is more interesting.
\end{rem}

\bigskip

Before giving sufficient conditions for a non-cyclic simple dimension group to be globally irrationally miscible , let us describe the implication of this property for  dynamical systems.
\bigskip

Let $(X,\,T)$ be a Cantor minimal system and $(G,\,u)=(K^0(X,\,T),\,[1_X])$. By  results in section 3, we have 
$$\Theta(E(X,\,T))\subset J(X,\,T)\ \ \ {\rm and}\ \ \ \Phi\circ \Theta(\theta)=\theta,\ \forall \theta\in E(X,\,T).$$
The condition $\cap_{\tau \in S(G,u)} \tau(G) \subset
\Q$ is not affected when we factor out the infinitesimals from $G$.
Therefore, if $(K^0(X,\,T),\,[1_X])$ is irrationally miscible , then the additive group of continuous eigenvalues of any Cantor minimal systems  orbit equivalent to $(X,\,T)$ is  contained in $\Q$.

\bigskip

\begin{prop}\label{unique trace} If $G$ is a simple dimension group with a rational-valued trace, then $G$ is
globally irrationally miscible .\end{prop}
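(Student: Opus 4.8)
The plan is to unwind the definition of global irrational miscibility and reduce to a single, concrete computation. Since $G$ is globally irrationally miscible precisely when $(G,u)$ is irrationally miscible for \emph{every} order unit $u$, I would fix an arbitrary order unit $u$ and show $(G,u)$ is irrationally miscible. Invoking Proposition \ref{miscible } (equivalently, Corollary \ref{intersection}), this reduces to the single inclusion
$$\bigcap_{\tau\in S(G,u)}\tau(G)\subseteq\Q.$$
The strategy is to exhibit one rational-valued \emph{state} in $S(G,u)$ and note that the intersection is automatically contained in the range of any single state.

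The key step is to renormalize the given trace against $u$. Let $\tau_0\colon G\to\R$ denote the hypothesized nonzero rational-valued trace, so $\tau_0$ is a positive homomorphism with $\tau_0(G)\subseteq\Q$. Since $\tau_0\ne 0$, pick $g\in G$ with $\tau_0(g)\ne 0$; because $u$ is an order unit there is $n\in\Z^+$ with $nu-g\in G^+$ and $nu+g\in G^+$, so positivity of $\tau_0$ gives $n\tau_0(u)\ge|\tau_0(g)|>0$, whence $\tau_0(u)>0$. As $u\in G$, the hypothesis forces $\tau_0(u)\in\Q$, so $\tau_0(u)$ is a positive rational. Setting $\sigma:=\tau_0/\tau_0(u)$, we obtain a genuine state $\sigma\in S(G,u)$ whose range is $\sigma(G)=\tau_0(G)/\tau_0(u)\subseteq\Q$.

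To conclude, observe that $\sigma$ is one of the states appearing in the intersection, so any $\lambda\in\bigcap_{\tau\in S(G,u)}\tau(G)$ lies in $\sigma(G)\subseteq\Q$; hence the intersection is contained in $\Q$. By Proposition \ref{miscible }, $(G,u)$ is irrationally miscible, and since $u$ was an arbitrary order unit, $G$ is globally irrationally miscible. The only point that genuinely requires care—and thus the \emph{main obstacle}, though a mild one—is the normalization: verifying that $\tau_0(u)$ is a \emph{positive} rational, which is exactly where both the order-unit property (for positivity) and the rationality hypothesis are used. Everything else follows immediately from the definitions and the already-established equivalence in Proposition \ref{miscible }.
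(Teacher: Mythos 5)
Your proof is correct and follows essentially the same route as the paper: the paper's (one-line) argument is precisely that a rational-valued trace survives renormalization by any order unit, so via Proposition \ref{miscible } the intersection $\bigcap_{\tau\in S(G,u)}\tau(G)$ sits inside the rational range of that one renormalized state. You have simply written out the normalization step (positivity and rationality of $\tau_0(u)$) that the paper leaves implicit.
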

\begin{proof}
Follows immediately from Proposition \ref{miscible } above, since
there exists a trace
$\tau$ \st  $\tau(G)\subset\Q$, and renormalizations  (changing the order unit)  do not affect this
property.
\end{proof}

Recall that a matrix has {\it equal column
sums} if all its column sums are the same.

\begin{cor}\label{circulant}

Let $\{A_n \}$ be a sequence of nonnegative integer matrices, with equal column sum \st
the direct limit $G$ of $ A_n:\Z^{f(n)} \rightarrow \Z^{f(n+1)}$ is simple (and non cyclic). Then $G$ is globally irrationally miscible .
\end{cor}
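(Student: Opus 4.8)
The plan is to produce a single rational-valued trace on $G$ and then quote Proposition \ref{unique trace}. The equal-column-sum hypothesis is precisely what makes the coordinate-sum functional compatible with the connecting maps up to a scalar. Write $c_n$ for the common column sum of $A_n$ and let $\sigma_n\colon \Z^{f(n)}\to\Z$ be the functional $\sigma_n(x)=\sum_j x_j$. Summing the entries of $A_n x$ over all rows and interchanging the two sums gives, in each column $j$, the factor $\sum_i (A_n)_{ij}=c_n$, so that $\sigma_{n+1}\circ A_n = c_n\,\sigma_n$.

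First I would ensure that every $c_n$ is strictly positive. Since $G$ is simple, after telescoping we may assume each $A_n$ has strictly positive entries. Telescoping replaces consecutive matrices by their products, and a product of equal-column-sum matrices again has equal column sums (its common column sum being the product of the original ones, as the same interchange-of-sums computation shows); thus the hypothesis is preserved and $G$ is unchanged, while now $c_n\geq 1$ for all $n$. Next I would normalize: set $e_1=1$ and $e_n=c_1c_2\cdots c_{n-1}$ for $n\geq 2$, and put $\phi_n=\sigma_n/e_n$. From $\sigma_{n+1}\circ A_n=c_n\sigma_n$ and $e_{n+1}=c_n e_n$ one gets $\phi_{n+1}\circ A_n=\phi_n$, so the $\phi_n$ form a compatible family and descend to a group homomorphism $\phi\colon G\to\R$. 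Each $\sigma_n$ is nonnegative on nonnegative vectors and each $e_n$ is a positive integer, so $\phi$ is a nonzero positive homomorphism with $\phi(G)\subseteq\Q$. For any order unit $u$ one has $\phi(u)>0$ (a nonzero positive homomorphism on a simple dimension group is strictly positive on order units, since otherwise $-nu\leq g\leq nu$ would force $\phi\equiv 0$), so $\tau:=\phi/\phi(u)$ is a state with $\tau(G)\subseteq\Q$, i.e.\ a rational-valued trace. Proposition \ref{unique trace} then yields that $G$ is globally irrationally miscible.

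The only point that genuinely needs care is the positivity of the column sums $c_n$, equivalently that none of the connecting matrices degenerates to zero; this is exactly where simplicity of $G$ is used, through the telescoping to strictly positive matrices together with the observation that this operation preserves the equal-column-sum condition. The remaining verifications --- compatibility, positivity, and rationality of $\phi$ --- are routine.
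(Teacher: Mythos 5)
Your proof is correct and follows essentially the same route as the paper: both construct the rational-valued trace given by the coordinate-sum functional normalized by the running product of the column sums, verify compatibility with the connecting maps via $\sigma_{n+1}\circ A_n=c_n\sigma_n$, and then invoke Proposition \ref{unique trace}. Your write-up is in fact more careful than the paper's one-line version (which states the trace with the fraction apparently inverted), since you also justify the positivity of the $c_n$ by telescoping and the normalization at the order unit.
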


\begin{proof}
Let $c_n$ be the column sum of $A_n$. For $a = [a(n), n] \in G$ and $c(n) = \prod_{l \le n} c_l$ then $\tau(a) = \frac{c(n)}{\sum_n a(n)}$ defines a trace on $G$, and $\tau (G) \subset \Q $.
\end{proof}


\begin{eg}
The condition of Corollary \ref{circulant}  applies when all the matrices  $\{A_n \}$ are circulant. For example, let $(B,\,V)$ be the Bratteli diagram with $|V_n|=2$ for all $n\geq 2$  and for any $n\geq 1$, $A_n$  be the symmetric matrix, $\begin{bmatrix} k_n& \ell_n\\\ell_n& k_n \end{bmatrix}$.  By Corollary \ref{circulant}, the associated dimension group, $G$ is irrationally miscible . Moreover,  $\Inf(G)$ is trivial. Therefore any Cantor minimal system $(X,\,T)$ with $K^0(X, T)$ order isomorphic to $G$ has only rational continuous eigenvalues.


\end{eg}

\bigskip

Recall that for an abelian group $H$, $\rk H$ denotes the dimension of the rational vector space $H \otimes \Q$.

\medskip

\begin{prop}\label{rank}
Let $G$ be a simple dimension group with $n$ pure traces, $n>1$.

If  either $\rk G/\Inf(G) = n$ or both $\rk G/\Inf(G) = n+1$ and $G/\Inf(G)$ is finitely
generated, then $G$ is globally irrationally miscible .
\end{prop}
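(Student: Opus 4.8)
The plan is to reduce to the case of trivial infinitesimals and then read everything off the affine trace representation. Since the condition $\bigcap_{\tau\in S(G,u)}\tau(G)\subseteq\Q$ is unchanged on passing to $\bar G:=G/\Inf(G)$ (every trace kills $\Inf(G)$, and the two groups have the same sets of trace values), and since by Proposition \ref{miscible } this condition \emph{is} irrational miscibility, it suffices to prove the statement for $\bar G$. The group $\bar G$ is again a simple non-cyclic dimension group, now with $\Inf(\bar G)=\{0\}$ and exactly $n$ pure traces, and the hypotheses ($\rk\bar G=n$, respectively $\rk\bar G=n+1$ with $\bar G$ finitely generated) are intrinsic to $\bar G$ and independent of the order unit. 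So I fix an order unit $u$ and argue for $(\bar G,u)$.

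Because $\bar G$ has exactly $n$ pure traces $\tau_1,\dots,\tau_n$, the state space $S(\bar G,u)$ is the $(n-1)$-simplex they span, and evaluation at the vertices gives $\Aff(S(\bar G,u))\iso\R^n$. As $\Inf(\bar G)=\{0\}$, the affine representation $g\mapsto\hat g=(\tau_1(g),\dots,\tau_n(g))$ is injective; write $\Gamma$ for its image and note $\hat u=\pmb 1:=(1,\dots,1)$. Under this identification $J(\bar G,u)$ corresponds to $D:=\Gamma\cap\R\pmb 1$ and, via Corollary \ref{intersection}, $\Phi(J(\bar G,u))$ corresponds to $\set{t\in\R: t\pmb 1\in\Gamma}$. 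Since $\pmb 1\in D$, irrational miscibility of $(\bar G,u)$ is exactly the statement $\rk D\le 1$, i.e. $D\subseteq\Q\pmb 1$. Thus the whole problem becomes a rank computation for $D$.

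Let $\pi:\R^n\to\R^n/\R\pmb 1\iso\R^{n-1}$ be the quotient. From the short exact sequence $0\to D\to\Gamma\to\pi(\Gamma)\to 0$ and additivity of rank, $\rk\Gamma=\rk D+\rk\pi(\Gamma)$. The key input is that a simple non-cyclic dimension group has dense image under its affine representation \cite{EHS}; hence $\Gamma$ is dense in $\R^n$, so $\pi(\Gamma)$ is dense, in particular $\R$-spanning, in $\R^{n-1}$, giving $\rk\pi(\Gamma)\ge n-1$. In the first case $\rk\Gamma=n$, whence $\rk D=\rk\Gamma-\rk\pi(\Gamma)\le n-(n-1)=1$, and we are done.

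In the second case $\bar G$, hence $\Gamma$ and its quotient $\pi(\Gamma)$, is finitely generated. Here I upgrade the bound using the elementary fact that a finitely generated subgroup of $\R^{m}$ that is dense must have rank strictly larger than $m$ (a finitely generated subgroup whose rank equals the real dimension of its span is a lattice, hence discrete). Applied to the dense finitely generated group $\pi(\Gamma)\subseteq\R^{n-1}$ this yields $\rk\pi(\Gamma)\ge n$, so $\rk D=\rk\Gamma-\rk\pi(\Gamma)\le(n+1)-n=1$, again forcing $D\subseteq\Q\pmb 1$. The step I expect to be most delicate is precisely this density input and its interplay with finite generation: without density one only obtains $\rk\pi(\Gamma)\ge n-1$, hence merely $\rk D\le 2$ in the rank-$(n+1)$ case, which does not exclude an irrational value; it is exactly the combination \emph{finitely generated} plus \emph{dense} (so that the rank exceeds the dimension) that is indispensable in the second hypothesis.
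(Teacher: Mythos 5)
Your proof is correct and follows essentially the same route as the paper's: both pass to $G/\Inf(G)$, use density of the affine representation in $\Aff(S(G,u))\iso\R^n$, quotient by the line of constant functions, and compare ranks, invoking in the finitely generated case the fact that a free dense subgroup of $\R^{n-1}$ must have rank at least $n$. The only difference is presentational — you compute $\rk\Gamma=\rk D+\rk\pi(\Gamma)$ directly, while the paper runs the same rank count as a proof by contradiction starting from a hypothetical irrational constant element.
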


\begin{proof}
Let $u$ be an order unit of $G$ and $\{\sigma_i:\ 1\leq i\leq n\}$ be the set of pure (normalized) traces on $G$. Let $\hat{G}$ denote both the (simple dimesnion) group $G/\Inf(G)$ and the dense image of $G$ in $\Aff(S(G,\,u))\iso\R^n$. 

Let $\hat{\Phi}$ denote the restriction to $\hat{G}$ of the map $\Phi$ from $\Aff(S(G,\,u))$ to $\R^{n-1}$, given by
$$\Phi(f)=(\sigma_2(f)-\sigma_1(f),\,\cdots,\,\sigma_n(f)-\sigma_1(f)),\ f\in \Aff(S(G,\,u)).$$
If $(G,\,u)$ is not irrationally miscible , then there exists an irrational number $\lambda$ and $g\in G$ \st $\hat{g}=\lambda\hat{u}$. Then the $\ker\hat{\Phi}$ contains both $\hat{g}$ and $\hat{u}$, so has rank at least $2$ and thus $Im(\hat{\Phi})$ has rank at most $\rk(\hat{G})-2$. 

As $\Phi$ is continuous and surjective, $\hat{\Phi}(\hat{G})$ is dense in $\R^{n-1}$ and therefore must be of rank at least $n-1$. Hence, we obtain a contradiction if $\rk(\hat{G})=n$.

If $\rk(\hat{G})=n+1$ and $\hat{G}$ is finitely generated, then we note that a free dense subgroup of $\R^{n-1}$ must have rank at least $n$, again reaching a contradiction.
\end{proof}
\bigskip

If $G$ is a simple dimension group with an order unit $u$, we denote (as in the proof of the above proposition) by $\hat{G}$ both the (simple dimesnion) group $G/\Inf(G)$ and the dense image of $G$ in $\Aff(S(G,\,u))\iso\R^n$.  Set $J=\{\hat{g}\in\hat{G}:\ \hat{g}\  {\rm is\ constant}\}$. 

\bigskip

\begin{prop}\label{sprank}
Let $G $ be a simple dimension group  with
order unit $u$ \st $\rk(G/\Inf G)<\infty$.  Let $l$ be  the number of pure traces of $(G,\,u)$. Then $l\leq \rk(\hat{G})$ and $\rk (\hat{G}/J) \geq l-1$. If $G$ itself has finite rank, then
$$
\rk G \geq \rk J + l-1;
$$
and if $G/\Inf G$ is finitely generated, then $\rk G/J \geq l$.
\end{prop}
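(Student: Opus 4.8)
The plan is to exploit that $\hat G$ sits as a \emph{dense} subgroup of $\Aff(S(G,u)) \iso \R^l$ (with $l$ the number of pure traces), together with additivity of rank across short exact sequences and the elementary principle that a dense subgroup of a Euclidean space must have large rank. Throughout I would use two facts about a subgroup $H \subseteq \R^k$: (i) if $H$ is dense, then $\rk H \ge k$, because density forces the $\R$-span of $H$ to be all of $\R^k$, so $H$ contains $k$ elements that are independent over $\R$ and hence over $\Q$; and (ii) if moreover $H$ is finitely generated, then $\rk H \ge k+1$, since a finitely generated torsion-free group of rank exactly $k$ would, by density, have $k$ generators spanning $\R^k$ and so form a full lattice, which is discrete — contradicting density (here $k \ge 1$).

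With these in hand, $l \le \rk(\hat G)$ is immediate from fact (i) applied to the dense subgroup $\hat G \subseteq \R^l$. For the bound $\rk(\hat G/J) \ge l-1$, I would reuse the map from the proof of Proposition \ref{rank}: with $\sigma_1,\dots,\sigma_l$ the pure traces, let $\hat\Phi\colon \hat G \to \R^{l-1}$ be the restriction of
$$f \longmapsto (\sigma_2(f)-\sigma_1(f),\,\dots,\,\sigma_l(f)-\sigma_1(f)).$$
Its kernel on $\Aff(S(G,u))$ consists exactly of the constant functions, so $\ker\hat\Phi = J$ and $\hat G/J \iso \mathrm{Im}(\hat\Phi)$. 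Since $\hat\Phi$ is continuous and surjects $\Aff(S(G,u))$ onto $\R^{l-1}$, and $\hat G$ is dense, $\mathrm{Im}(\hat\Phi)$ is dense in $\R^{l-1}$; fact (i) then gives $\rk(\hat G/J) = \rk\,\mathrm{Im}(\hat\Phi) \ge l-1$.

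The finite-rank inequality then follows by bookkeeping. Applying additivity of rank to the short exact sequences $0 \to \Inf G \to G \to \hat G \to 0$ and $0 \to J \to \hat G \to \hat G/J \to 0$ yields $\rk G = \rk \Inf(G) + \rk J + \rk(\hat G/J)$; discarding the nonnegative term $\rk \Inf(G)$ and inserting the previous bound produces $\rk G \ge \rk J + l - 1$. Finally, when $\hat G = G/\Inf(G)$ is finitely generated, so is $\hat G/J$, which is torsion-free (it embeds in $\R^{l-1}$) and hence free; fact (ii), applied with $k = l-1$, upgrades the bound to $\rk(\hat G/J) \ge l$ (for $l \ge 2$; the case $l=1$ is degenerate, with $\hat G/J = 0$).

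The only genuine content lies in the two density lemmas (i) and (ii) — in particular the lattice/discreteness argument powering the extra $+1$ in (ii) — and in verifying that $J$ is precisely the kernel of $\hat\Phi$; every remaining step is just the additivity of rank under $-\otimes\Q$ in short exact sequences.
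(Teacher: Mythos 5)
Your proof is correct and follows essentially the same route as the paper: the trace-difference map $\hat\Phi$ into $\R^{l-1}$, density of the image, and the rank bounds for (finitely generated) dense subgroups of Euclidean space. You are in fact somewhat more complete than the paper's own proof, which leaves the bound $l\leq\rk(\hat G)$, the rank bookkeeping for $\rk G\geq \rk J+l-1$, and the $l=1$ degeneracy implicit.
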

\begin{proof}
Since $\Inf G \subset J$, $G/J \iso (G/\Inf G)/(J/\Inf G)$, whence
the former is of finite rank.  Let $\{\sigma_1, \cdots, \sigma_l\}$ be the set of pure traces of $G$ and let $\Phi:\Aff(S(G,\,u))\rightarrow\R^{l-1}$ be given (as in th eproof of Proposition \ref{rank}) by $$\Phi(f)=(\sigma_2-\sigma_1(f),\,\cdots,\,\sigma_l-\sigma_1(f)),\ f\in\Aff(S(G,\,u)).$$
Then $\Phi(G)$ is a dense subgroup of $\R^{l-1}$, and as $J\subset\ker(\Phi)$, $\Phi(G/J)$ is isomorphic to a dense subgroup of $\R^{l-1}$. Hence, $\rk(G/J)\geq l-1$. If $G/\Inf(G)$ is finitely generated, then $\rk(G/J)\geq l$ (as a dense subgroup of $\R^{l-1}$ must have rank at least $l$).

\end{proof}

\bigskip

If $(X,\,T)$ is a Cantor minimal  system and 
 $(G,\,u)=(K^0(X,\,T),\,[1_X])$,   then by results in section 3,  $H(X,\,T)=\Theta(E(X,\,T))\subset J(X,\,T)$ and therefore, 
 $$\rk(E(X,\,T))\leq \rk(J(X,\,T)).$$ If $(X,\,T)$ has $n$ ergodic invariant measures, then the proposition above  yields 
 \begin{equation}\label{dur}
 \rk(K^0(X,\,T))\geq \rk(E(X,\,T))+n-1
 \end{equation}
or $$\rk(K^0(X,\,T))\geq \rk(E(X,\,T))+n$$ 
if $K^0(X,\,T)$ is finitely generated. 

\bigskip

\section{Weak mixing  Bratteli-Vershik systems }

A topological dynamical system $(X,\,T)$ is {\it weakly mixing\/} if the product system
$(X\times X,\,T\times T)$ is topologically transitive, or equivalently, if  for any two nonempty open
sets    $U$ and $V$ of $X$, the set $\mathcal N(U,\,V):=\{n\in\mathbb
Z:\ \  T^nU\cap V\neq \emptyset\}$ is {\it thick\/} \cite{furs}. Recall that a subset of $\Z$ is {\it thick\/} if $$\forall \ k\in\mathbb N,\ \ \exists\ n;\ \ n, n+1, \cdots, n+k\in
\mathcal N(U,\,V).$$
By \cite[Proposition 3]{indian}, $(X,\,T)$ is weakly mixing if and only if for any pair of nonempty open sets $U$ and $V$ there exist $n\in\mathbb N$ \st
$n,n+1\in{\mathcal N}(U,\,V)
$.

Recall that for a minimal system, $(X,\,T)$,  to be weakly mixing, it is necessary and sufficient that its additive continuous spectrum 
 $E(X,\,T)$ is equal to $\Z$ (see \cite{glasner} for example).
 \medskip
 
 We will denote by $\mathcal  W\mathcal M$, the collection of weakly mixing Cantor minimal systems. 

 \medskip

For a simple dimension group $(G,\,u)$, let  SOE$(G,\,u)$ denote the class of Cantor minimal systems $(X,\,T)$ \st $(K^0(X,\,T),\,[1_X])$  is order isomorphic to $(G,\,u)$.
 Recall (see Theorem  \ref{storbit})  that two Cantor minimal systems belong to SOE$(G,\,u)$ if and only if they are strongly orbit equivalent.
 
 Similarly, for a simple dimension group $(G,\,u)$ whose infinitesimals subgroup is trivial, let OE$(G,\,u)$ be the class of all Cantor minimal systems $(X,\,T)$ \st  $(K^0(X,\,T)/\Inf(K^0(X,\,T)),\,[1_X])$ is order isomorphic to $(G,\,u)$. Two Cantor minimal systems belong to OE$(G,\,u)$ if and only if they are orbit equivalent (see Theorem \ref{orbit}).
 \bigskip

\begin{prop}\label{weakmixing}
Let $(G,u)$ be a simple dimension group with order unit $u$. If $\Q(G,\,u)=\Z$ and
$(G,\,u)$ is irrationally miscible , then SOE$(G,\,u)\subset {WM}.$
\end{prop}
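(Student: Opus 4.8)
The plan is to reduce weak mixing to a statement about the continuous spectrum: recall that a minimal system $(X,\,T)$ is weakly mixing precisely when $E(X,\,T)=\Z$. Thus it suffices to fix an arbitrary $(X,\,T)\in\mathrm{SOE}(G,\,u)$ and show that its additive continuous spectrum is exactly $\Z$.

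The first step is to transport the two hypotheses to the system. Since membership in $\mathrm{SOE}(G,\,u)$ means $(K^0(X,\,T),\,[1_X])\iso(G,\,u)$, every invariant built from the order structure carries over along the isomorphism; in particular $(K^0(X,\,T),\,[1_X])$ is again irrationally miscible and $\Q(K^0(X,\,T),\,[1_X])=\Z[1_X]$.

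The second step uses irrational miscibility to bound the spectrum. By the Section 3 results we have $\Theta(E(X,\,T))\subseteq J(X,\,T)$ together with $\Phi\circ\Theta(\theta)=\theta$ for all $\theta\in E(X,\,T)$. Hence each $\theta\in E(X,\,T)$ equals $\Phi(\Theta(\theta))\in\Phi(J(X,\,T))\subseteq\Q$, so that $E(X,\,T)\subseteq\Q$.

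The third step uses the trivial rational subgroup to pin the spectrum down. By Proposition \ref{cor2}, $\Theta(E(X,\,T)\cap\Q)=\Q(K^0(X,\,T),\,[1_X])=\Z[1_X]$. Since $\Theta(n)=n[1_X]$ for every $n\in\Z$ (the class $[1_{U_0}]$ vanishing because a clopen set of measure zero in a minimal system is empty), we have $\Theta(\Z)=\Z[1_X]$, and injectivity of $\Theta$ forces $E(X,\,T)\cap\Q=\Z$. Combining with $E(X,\,T)\subseteq\Q$ from the second step yields $E(X,\,T)=\Z$, so $(X,\,T)$ is weakly mixing, as required. The argument is essentially an assembly of the Section 3 machinery; the only points demanding care are verifying that $\Phi\circ\Theta$ genuinely lands the spectrum in $\Q$ and that the injectivity of $\Theta$ collapses $E(X,\,T)\cap\Q$ to exactly $\Z$, rather than merely to some group squeezed between $\Z$ and $\Q$.
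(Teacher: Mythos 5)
Your proposal is correct and follows essentially the same route as the paper: reduce weak mixing to $E(X,\,T)=\Z$, use irrational miscibility together with the Section~3 embedding $\Theta$ (via Theorem \ref{clopen}) to get $E(X,\,T)\subseteq\Q$, and then invoke Proposition \ref{cor2} with $\Q(G,\,u)=\Z$ to conclude. The only difference is that you spell out the final step (that $\Theta(n)=n[1_X]$ and injectivity of $\Theta$ collapse $E(X,\,T)\cap\Q$ to exactly $\Z$), which the paper leaves implicit.
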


Remark. In other words, if $(X,\,T)$ is a minimal Cantor system \st $(K^0(X,\,T), [1_X])$ is irrationally miscible  and has trivial {\it rational}  spectrum, then $(X,\,T)$ is weakly mixing.

\begin{proof}
Let $(X,\,T)$ be a Cantor minimal system with $(K^0(X,T), [1_X])=(G,\,u)$. As $(G,\,u)$ is irrationally miscible  and by Theorem \ref{clopen}, $E(X,\,T)\subseteq \Q$. So by Proposition \ref{cor2}, $\Theta(E(X,\,T))=\Q(G,\,u)=\Z$.
\end{proof}

\bigskip

The following theorem is a direct corollary of  \cite[Theorem 6.1]{Ormes}.

\begin{thm}
Let $(G,\,u)$ be a simple dimension group whose rational subgroup $\Q(G,\,u)$ is equal to $\Z$. Then there exists a topologically weakly mixing Cantor minimal system $(X,\,T)$ belonging to SOE$(G,\,u)$.
\end{thm}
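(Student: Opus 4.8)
The plan is to produce a single realization of $(G,u)$ whose continuous spectrum is trivial and then read off weak mixing from the criterion recalled at the start of this section. First I would note that, for a minimal system, weak mixing is equivalent to $E(X,\,T)=\Z$, so it suffices to exhibit one Cantor minimal system $(X,\,T)$ in SOE$(G,\,u)$ with $E(X,\,T)=\Z$. Observe that the hypothesis $\Q(G,\,u)=\Z$ already pins down the rational part of the spectrum of \emph{every} realization: for any $(X,\,T)\in$ SOE$(G,\,u)$, Theorem \ref{storbit} gives $(K^0(X,\,T),\,[1_X])\iso(G,\,u)$, and then Proposition \ref{cor2} yields $\Theta(E(X,\,T)\cap\Q)=\Q(G,\,u)=\Z$. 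Since $\Theta$ is injective and carries $n\in\Z$ to $n[1_X]$, this forces $E(X,\,T)\cap\Q=\Z$. Thus the only way a realization can fail to be weakly mixing is by carrying an irrational eigenvalue, and the whole task reduces to locating a realization that carries none.

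The key step is to invoke Ormes' strong orbit realization theorem \cite[Theorem 6.1]{Ormes}, which permits one to prescribe, inside the strong orbit equivalence class determined by the pointed dimension group $(G,\,G^+,\,u)$, the additive group of continuous eigenvalues of a realization, subject to the compatibility of the prescribed data with $(G,\,u)$. I would apply it with the smallest admissible target, namely the trivial eigenvalue group $E=\Z$ (equivalently, prescribing $\SP(T)=\{1\}$). In Ormes' setup the admissibility of a prescribed eigenvalue group is controlled by its compatibility with $(G,\,u)$, and in particular its rational part must match $\Q(G,\,u)$; here there is nothing to prescribe beyond the integers, so the requirement reads $E\cap\Q=\Z=\Q(G,\,u)$, which is precisely the hypothesis. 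No irrational eigenvalue is being imposed, so the compatibility reduces to this rational condition, which is satisfied.

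Ormes' theorem then produces a Cantor minimal system $(X,\,T)$ with $(K^0(X,\,T),\,K^0_+(X,\,T),\,[1_X])\iso(G,\,G^+,\,u)$, so that $(X,\,T)\in$ SOE$(G,\,u)$ by Theorem \ref{storbit}, and with $E(X,\,T)=\Z$. By the weak mixing criterion recalled above, $(X,\,T)\in\mathcal{WM}$, which is the assertion.

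The main obstacle I anticipate is not conceptual but a matter of bookkeeping: unwinding the hypotheses of \cite[Theorem 6.1]{Ormes} to confirm that prescribing the trivial spectrum is admissible for the given $(G,\,u)$. The underlying content is that the rational subgroup $\Q(G,\,u)$ is the sole obstruction to weak mixing within a strong orbit equivalence class; once $\Q(G,\,u)=\Z$ is assumed, Ormes' construction has nothing left to obstruct it, and the weakly mixing realization is automatic.
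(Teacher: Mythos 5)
Your overall strategy is the paper's: reduce to exhibiting a single realization with $E(X,\,T)=\Z$ and obtain it from Ormes' Theorem 6.1, with the hypothesis $\Q(G,\,u)=\Z$ entering as the compatibility condition. Your first paragraph (that $\Q(G,\,u)=\Z$ forces $E(X,\,T)\cap\Q=\Z$ for every realization, so only an irrational eigenvalue could obstruct weak mixing) is correct, though the argument below does not actually need it.

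The gap is in how you invoke Ormes. Theorem 6.1 of \cite{Ormes} is a \emph{measure-theoretic} realization theorem: given an ergodic non-atomic measure-preserving system $(Y,\,\nu,\,T)$ whose rational measurable spectrum is compatible with $\Q(G,\,u)$, it produces a Cantor minimal system $(X,\,S)$ in SOE$(G,\,u)$ together with an invariant measure $\mu$ such that $(X,\,S,\,\mu)$ is measurably isomorphic to $(Y,\,\nu,\,T)$. It does not allow you to ``prescribe the additive group of continuous eigenvalues'' of the realization; there is no admissible ``target $E=\Z$'' to feed it, and as written your key step appeals to a statement stronger than what is in \cite{Ormes}. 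The repair is short and is exactly the paper's proof: take $(Y,\,\nu,\,T)$ to be any \emph{measurably} weakly mixing system on a Lebesgue space, so that its measurable spectrum is trivial and the compatibility condition with $\Q(G,\,u)=\Z$ is satisfied; apply Ormes to get $(X,\,S,\,\mu)$ as above; and then observe that every continuous eigenvalue of $(X,\,S)$ is a measurable eigenvalue of $(X,\,S,\,\mu)$, hence trivial. Therefore $E(X,\,S)=\Z$ and $(X,\,S)$ is topologically weakly mixing. The missing idea, then, is the passage through a measurably weakly mixing model together with the elementary fact that continuous eigenvalues are measurable eigenvalues for any invariant measure.
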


\begin{proof}
Let $(Y,\,\nu,\,T)$  be a measurable weakly mixing system on a Lebesgue space. By \cite[Theorem 6.1]{Ormes}, there exists a Cantor minimal system $(X,\,S)$ with $(K^0(X,\,S),\,[1_X])\simeq (G,\,u)$ and $\mu\in{\mathcal M}_S(X)$ \st  $(X,\,S,\,\mu)$ is isomorphic to $(Y,\,\nu,\,T)$. In particular, $E(X,\,S)=\Z$. 
\end{proof}

\bigskip


\begin{prop}\label{last}
Let  $(G,u)$ be a simple non-cyclic dimension group with trivial infinitesimal subgroup.  If $(G,\,u)$ is irrationally miscible  and $\Q(G,\,u)\subset\Z$ then $${\rm OE}(G,\,u)\subset {\mathcal W\mathcal M}.$$
\end{prop}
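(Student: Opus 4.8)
The statement concerns orbit equivalence, which by Theorem \ref{orbit} is governed by the quotient $(K^0/\Inf(K^0),[1_X])$. Since $(G,u)$ already has trivial infinitesimal subgroup, a system $(X,\,T)\in\mathrm{OE}(G,\,u)$ is one for which $(K^0(X,\,T)/\Inf(K^0(X,\,T)),[1_X])$ is order isomorphic to $(G,\,u)$. The goal is to show every such $(X,\,T)$ is weakly mixing, i.e.\ that $E(X,\,T)=\Z$. The plan is to compare the two irrational-miscibility hypotheses on $(G,\,u)$ with the spectral data of $(X,\,T)$, which a priori lives on $K^0(X,\,T)$ rather than on its infinitesimal quotient.

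\textbf{Main steps.} First I would observe that, by Proposition \ref{miscible }, the irrational miscibility of $(G,\,u)$ is exactly the statement $\bigcap_{\tau\in S(G,u)}\tau(G)\subset\Q$, and as noted in the paragraph preceding Proposition \ref{unique trace}, this intersection condition is unchanged under factoring out infinitesimals. Hence the same condition holds for $(K^0(X,\,T),[1_X])$: indeed $S(K^0(X,\,T),[1_X])$ is in canonical bijection with $S(G,\,u)$ via the quotient map, and the images $\tau(K^0(X,\,T))$ coincide with the $\tau(G)$. Therefore $(K^0(X,\,T),[1_X])$ is itself irrationally miscible. Next I would transfer the rational hypothesis: since $\Inf(K^0(X,\,T))$ may be nontrivial, I need $\Q(K^0(X,\,T),[1_X])\subset\Z$ to follow from $\Q(G,\,u)\subset\Z$. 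By Proposition \ref{cor2}, $\Theta(E(X,\,T)\cap\Q)=\Q(K^0(X,\,T),[1_X])$, and a rational eigenvalue $p/q$ forces $q[1_{U}]=p[1_X]$ in $K^0(X,\,T)$ by Corollary \ref{rational}; pushing this relation down to the quotient $G$ shows $p/q\in\Q(G,\,u)\subset\Z$, whence $p/q\in\Z$, so $E(X,\,T)\cap\Q\subseteq\Z$. With both conditions in hand I would invoke the chain of reasoning in Proposition \ref{weakmixing}: irrational miscibility of $(K^0(X,\,T),[1_X])$ together with Theorem \ref{clopen} gives $E(X,\,T)\subseteq\Q$ (any eigenvalue $\theta$ produces $g\in K^0$ with $\hat g=\theta\pmb 1$, forcing $\theta\in\bigcap_\tau\tau(K^0)\subset\Q$), and then $E(X,\,T)\cap\Q\subseteq\Z$ forces $E(X,\,T)=\Z$. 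By the minimal-systems criterion recalled at the start of this section, $E(X,\,T)=\Z$ is equivalent to weak mixing, so $(X,\,T)\in\mathcal{WM}$.

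\textbf{Expected obstacle.} The routine parts are the spectral implications, which are essentially a rerun of Proposition \ref{weakmixing}. The one point requiring genuine care is the passage of the \emph{rational} subgroup condition across the infinitesimal quotient: irrational miscibility descends painlessly because it is an intersection-of-trace-ranges condition insensitive to infinitesimals, but $\Q(K^0(X,\,T),[1_X])$ is defined by the relation $pg=n[1_X]$ \emph{inside} $K^0(X,\,T)$, and I must check that the hypothesis $\Q(G,\,u)\subset\Z$ on the quotient genuinely controls it. The key is that the relation $q[1_U]=p[1_X]$ produced by Corollary \ref{rational} is an exact equality in $K^0(X,\,T)$ (not merely modulo infinitesimals), so its image under the quotient map is the corresponding exact relation in $G$; this is what lets me conclude $p/q\in\Q(G,\,u)$ and hence $p/q\in\Z$. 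Once that descent is justified the remaining argument is immediate.
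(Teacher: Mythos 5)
Your proposal is correct and follows essentially the same route as the paper: pass irrational miscibility up from $(G,u)$ to $(K^0(X,\,T),[1_X])$ (since the intersection-of-trace-ranges condition ignores infinitesimals) to get $E(X,\,T)\subseteq\Q$, then use the descent of the rational relation to the infinitesimal quotient to force $E(X,\,T)\cap\Q\subseteq\Z$, hence $E(X,\,T)=\Z$ and weak mixing. Your explicit justification of the descent step via Corollary \ref{rational} is in fact slightly more careful than the paper's one-line assertion that $\Q(K^0(X,\,T),[1_X])=\Z$.
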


\begin{proof}
Recall that by Proposition \ref{miscible }, if $(H,\,v)$ is a simple non-cyclic dimension group, then $(H,\,v)$ is irrationally miscible  if and only if $(H/\Inf(H),\,[v])$ is also irrationally miscible . Moreover, if $x\in\Q(H,\,v)$ then the image $[x]$ in $H/\Inf(H)$ belongs to $\Q(H/\Inf(H),\,[v])$. 

Let $(X,\,T)$ be a Cantor minimal system belonging to ${\rm OE}(G,\,u)$. Then $(K^0(X,\,T),\,[1_X])$ is irrationally miscible  and $$\Theta(E(X,\,T))\subset\Q(K^0(X,\,T),\,[1_X]).$$ 
As $\Q(K^0(X,\,T),\,[1_X])=\Z$, it follows that $(X,\,T)$ is weakly mixing.
\end{proof}
\bigskip

 In the rest of this section, we are going to prove that:

\begin{thm}\label{propo}
Let $B=(V,\,E)$ be a simple Bratteli diagram  \st $\Q(K^0(V,\,E),\,[v_0])=\Z$. Then 
 there exists a telescoping $\tilde{B}=(\tilde{V},\,\tilde{E})$ of $B$ and a proper 
ordering on $\tilde{B}$ whose associated Bratteli-Vershik system
  is weakly mixing.
\end{thm}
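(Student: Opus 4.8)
The plan is to show that a simple Bratteli diagram with trivial rational subgroup (meaning $\Q(K^0(V,E),[v_0]) = \Z$) can be telescoped and ordered so that the resulting Bratteli-Vershik system has no non-trivial continuous eigenvalues, i.e.\ is weakly mixing. By the criterion recalled at the start of Section~4, weak mixing is equivalent to $E(X,T) = \Z$. The hypothesis $\Q(K^0(V,E),[v_0]) = \Z$ together with Proposition~\ref{cor2} already rules out non-trivial \emph{rational} eigenvalues for \emph{any} properly ordered Vershik system on (a telescoping of) $B$, since the rational spectrum $E(X,T)\cap\Q$ is determined by the dimension group via $\Theta(E(X,T)\cap\Q) = \Q(K^0(X,T),[1_X]) = \Z$. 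Thus the entire burden of the proof is to eliminate \emph{irrational} eigenvalues, and this is exactly where the freedom to telescope and to choose the ordering must be exploited.

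First I would recall that by Theorem~\ref{clopen}, an irrational $\theta \in E(X,T)$ forces the existence of a clopen set $U$ with $1_U - \theta\cdot\pmb 1$ a real coboundary; equivalently, via Remark~\ref{affine}, there is $g \in G^+$ with $\hat g = \theta\cdot\pmb 1$. The strategy is to construct the ordering on a suitable telescoping $\tilde B$ so that any candidate eigenfunction is obstructed combinatorially. The natural tool is to work directly with the Vershik map: an eigenfunction $f$ with eigenvalue $\lambda = e^{2\pi i\theta}$ satisfies $f\circ\varphi_{\tilde B} = \lambda f$, and the key estimate comes from comparing values of $f$ along the orbit of the Vershik map within a Kakutani-Rokhlin tower. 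The plan is to arrange the ordering so that the tower heights (the lengths of the maximal paths between consecutive levels) contain, for each level, \emph{two consecutive integers} among the return times realized within a single tower. This is precisely the weak-mixing criterion from \cite[Proposition 3]{indian} quoted in the excerpt: for any pair of nonempty open sets $U,V$ there exist $n$ with $n,n+1 \in \mathcal N(U,V)$.

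The central construction, and the main obstacle, is the ordering step. I would telescope $B$ far enough that each incidence matrix has large strictly positive entries (possible by simplicity), giving ample room to design the ordering of $r^{-1}(v)$ at each vertex. The goal is to produce, for the Vershik system, return-time sets to cylinder sets that are thick, or at least contain consecutive integers, by placing edges so that consecutive Vershik orbit segments through a fixed cylinder differ in length by exactly one. Concretely, one exploits that if a vertex at level $n+1$ receives edges from vertices of differing path-counts at level $n$, then ordering these edges appropriately makes the Vershik successor map cross between sub-towers of heights differing by one, producing the consecutive return times needed. The delicate point is to do this \emph{simultaneously} for all pairs of cylinder sets and \emph{consistently} across all levels while preserving properness (a unique max and min path). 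I expect the hard part to be verifying that a single ordering achieves the consecutive-return-time property uniformly, rather than merely for one pair of open sets; the trivial-rational-subgroup hypothesis is what guarantees the tower heights are not all congruent modulo some fixed $q$, which is the obstruction that would otherwise force a rational eigenvalue and prevent the consecutive-integer configuration.

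Finally, once the ordering is fixed and the consecutive-return-time property is established, I would invoke \cite[Proposition 3]{indian} to conclude weak mixing directly, and then observe via the criterion $E(\tilde X,\tilde T) = \Z$ that the constructed Bratteli-Vershik system indeed lies in $\mathcal{W}\mathcal{M}$. It is worth noting that this conclusion is consistent with, and in a sense the existential counterpart of, the earlier Proposition~\ref{weakmixing}: there, irrational miscibility plus trivial rational subgroup forces \emph{every} strongly-orbit-equivalent realization to be weakly mixing, whereas here, under the weaker hypothesis of merely trivial rational subgroup, we produce \emph{at least one} weakly mixing realization by a careful choice of ordering.
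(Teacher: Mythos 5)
Your overall strategy --- telescope, then design the ordering so that some cylinder set admits two consecutive return times, and conclude weak mixing from \cite[Proposition 3]{indian} --- is the same as the paper's, but the proposal has a genuine gap at the one place where the hypothesis $\Q(K^0(V,\,E),\,[v_0])=\Z$ must actually be used. Your proposed mechanism is to order the edges so that the Vershik map ``crosses between sub-towers of heights differing by one''; but two tower heights $h_i^k$, $h_j^k$ with $h_j^k=h_i^k+1$ need not exist, and the hypothesis does not provide them (e.g.\ heights $2$ and $5$ have $\gcd$ equal to $1$ but differ by $3$). What the hypothesis does provide, via Corollary \ref{gcd}, is that $\gcd(h_1^k,\dots,h_{n(k)}^k)=1$ for every $k$, and the missing ingredient is the Schur--Brauer theorem on the Frobenius problem \cite{brauer}: every sufficiently large integer is a nonnegative integer combination of the $h_j^k$. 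The paper applies this to write $\ell_k h_1^k+1=\sum_j x_j^k h_j^k$ with $x_j^k\geq 0$, telescopes so that $v_1^{k+1}$ receives enough edges from each $v_j^k$, and then orders $r^{-1}(v_1^{k+1})$ so that an initial block of $\ell_k+x_1^k$ edges from $v_1^k$ is followed by $x_j^k$ edges from each $v_j^k$; this realizes both $\ell_k h_1^k$ and $\ell_k h_1^k+1$ as return times of the minimal path's cylinder $C(e^k)$ to itself. Without some such arithmetic step the consecutive return times you need simply may not be obtainable by the ordering alone.

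The second issue, which you flag yourself --- achieving the consecutive-return-time property ``simultaneously for all pairs of cylinder sets'' while keeping the ordering proper --- is real, but it is resolved in the paper by a separate reduction (Lemma \ref{new}): if the minimal path passes through $v_1^k$ for all $k$, it suffices to verify $\{n,\,n+1\}\subset{\mathcal N}(C(e^k),\,C(e^k))$ for the single nested sequence of cylinders $C(e^k)$ along the minimal path, since translating by powers of $T_B$ within the tower over $v_1^k$ propagates the property to all pairs of finite paths ending at $v_1^k$, and every cylinder contains such a cylinder for $k$ large. You should not leave this as an anticipated difficulty; it is a necessary lemma. Finally, your opening observation that Proposition \ref{cor2} disposes of the rational spectrum is correct but is not needed as a separate step, since the consecutive-return-time criterion kills all nontrivial eigenvalues at once.
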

To prove this theorem, we will use Corollary \ref{gcd} and Lemma  \ref{new}. The first one is a result of \cite{Ormes} which we prove for   sake of completeness. It gives a characterization of Bratteli diagrams whose associated dimension group has a trivial  rational subgroup. Lemma \ref{new} gives a technical characterization for a Bratteli-Vershik transformation to be weakly mixing.

\medskip

\begin{lem}\label{weak}
Let $(G,\,u)$ be a simple dimension group with order unit $u$. Then  its rational subgroup $\Q(G,\,u)$ is trivial if and only if 
 The equation $nx=u$ is not solvable for any  $x\in G$, $x\neq u$, and $n\in\mathbb Z$.
\end{lem}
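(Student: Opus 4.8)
The plan is to prove the two implications separately, reading ``trivial rational subgroup'' as $\Q(G,\,u)=\Z u$ (the smallest value it can take, since $u\in\Q(G,\,u)$ always), and reading the solvability condition as concerning integers $n$ with $|n|\geq 2$: for $n=\pm 1$ the only solutions of $nx=u$ are $x=\pm u$, with $x=u$ excluded and $x=-u$ reducing to the positive case by a sign change. Throughout I would use that a dimension group is torsion-free, which follows from unperforation (if $ng=0$ with $n\geq 1$ then $ng\in G^+$ and $n(-g)\in G^+$ force $g\in G^+$ and $-g\in G^+$, hence $g=0$).

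For the forward implication I would suppose $\Q(G,\,u)=\Z u$ and that $nx=u$ holds for some $x\in G$ and $n\geq 2$. By the definition of the rational subgroup (taking $p=n$ and writing $u=1\cdot u$) this immediately gives $x\in\Q(G,\,u)=\Z u$, so $x=mu$ for an integer $m$. Substituting yields $(nm-1)u=0$, and torsion-freeness together with $u\neq 0$ forces $nm=1$, impossible for $n\geq 2$. Hence no such solution exists.

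For the converse I would assume $nx=u$ has no solution with $n\geq 2$ and take any $g\in\Q(G,\,u)$, aiming to show $g\in\Z u$. By definition there are $p\in\Z\setminus\{0\}$ and $m\in\N$ with $pg=mu$; replacing $(g,p)$ by $(-g,-p)$ if needed I may take $p\geq 1$, and if $m=0$ then torsion-freeness gives $g=0$, so I may also take $m\geq 1$. Dividing out $d=\gcd(p,m)$ and again invoking torsion-freeness, I may assume $\gcd(p,m)=1$. The key step is then a Bézout argument: choosing integers $a,b$ with $am+bp=1$ and setting $x:=ag+bu$, a direct computation gives
$$ px = a(pg)+b(pu) = a(mu)+b(pu) = (am+bp)u = u. $$
If $p\geq 2$ this contradicts the hypothesis, so $p=1$ and $g=mu\in\Z u$. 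This yields $\Q(G,\,u)\subseteq\Z u$, and the reverse containment is automatic.

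The only genuine content is the Bézout computation in the converse. The point to be careful about is the reduction to the coprime case, which repeatedly uses torsion-freeness of $G$, together with the sign and normalization bookkeeping needed so that the hypothesis (stated for positive $n\geq 2$) applies to the integer $p$ that is produced.
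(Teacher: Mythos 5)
Your proof is correct and uses essentially the same argument as the paper: the nontrivial direction is handled by reducing to a coprime relation $pg=mu$ and applying the B\'ezout identity to manufacture an element $x$ with $px=u$, exactly as in the paper's proof. Your version is somewhat more careful about the sign/normalization bookkeeping, the reduction to $\gcd(p,m)=1$ via torsion-freeness, and the interpretation of the (loosely stated) solvability condition for $n=0,\pm 1$, but the core computation is identical.
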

\begin{proof}
If $\Q(G,\,u)=\Z$, then (1) is clearly satisfied. By Proposition \ref{unique trace}, we only have to prove that (1) implies $\Q(G,\,u)=\Z$.

If not, let $x\in G$ \st $px=qu$ with $(p,\,q)=1$. As there exists $a, b\in\Z$ with $ap+bq=1$ we have $bpx=bqu=(1-ap)u$. Hence, $p(bx+au)=u$, which contradicts (1).
\end{proof}

\bigskip
  Lemma \ref{weak} forces a combinatorial property  for any Bratteli diagram $B=(V,\,E)$ with $K^0(V,\,B)=G$. That is   Corollary \ref{gcd}.
\bigskip

Let $B=(V,\,E)$ be a Bratteli diagram with 
$$V=\sqcup_{k\geq 0} V_k,\ V_0=\{v^0\},\ V_k=\{v_1^k,\,v_2^k,\dots,\,v_{n(k)}^k\}.$$
For $k\geq 1$ and $1\leq j\leq n(k)$, let $h_i^k$ denote the number of (finite) paths from $v^0$ to $v_j^k$. We assume that $h_k=(h_1^k,\,\dots,\,h_{n(k)}^k)\in\mathbb N^{n(k)}$.

\begin{cor}\label{gcd}
Let $B=(V,\,E)$ be as above and $K^0(V,\,E)$ its associated (simple) dimension group. Then  $\Q(K^0(V,\,E),\,[v^0])=\Z$ if and only if for all $k\geq 1$, ${\rm gcd}(h_1^k,\,\dots, h_{n(k)}^k)=1$.
\end{cor}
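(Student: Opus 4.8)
The plan is to combine the arithmetic reformulation of a trivial rational subgroup supplied by Lemma \ref{weak} with the concrete description of elements of $K^0(V,\,E)$ as a direct limit of the groups $\Z^{n(k)}$. The key translation is that $[v^0]$, the order unit, corresponds to the vector $h_k=(h_1^k,\,\dots,\,h_{n(k)}^k)$ at level $k$ (the path-counts from the root), and that solving $nx=u$ in the direct limit amounts, after telescoping to a sufficiently deep level, to finding a lattice vector $x$ at some level $\ell\geq k$ whose image in $\Z^{n(\ell)}$ is an integer multiple of the level-$\ell$ path-count vector $h_\ell$.

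First I would set up the direct-limit bookkeeping: an element $g\in K^0(V,\,E)$ is represented by a pair $[a,\,k]$ with $a\in\Z^{n(k)}$, two such pairs being equal when their images agree under the transition maps $A_{k+1}\cdots A_\ell$ for some large $\ell$, and the order unit is represented by $[h_k,\,k]$ for every $k$. I would verify the standard fact that the incidence matrices send $h_k$ to $h_{k+1}$, i.e.\ $A_{k+1}h_k=h_{k+1}$, so that $h_k$ consistently represents the order unit. I would then prove the two implications separately.

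For the ``only if'' direction (assuming $\Q=\Z$), I would argue the contrapositive: if $\gcd(h_1^k,\,\dots,\,h_{n(k)}^k)=d>1$ for some fixed $k$, then since the incidence matrices are nonnegative integer matrices, every entry of $h_\ell$ for $\ell\geq k$ is divisible by $d$ as well (because $h_\ell=A_\ell\cdots A_{k+1}h_k$ is an integer combination of the entries of $h_k$). This lets me exhibit $x:=[\tfrac{1}{d}h_k,\,k]\in K^0(V,\,E)$ with $dx=u$ and $x\neq u$, contradicting condition (1) of Lemma \ref{weak}. For the ``if'' direction (assuming all $\gcd$'s equal $1$), I would suppose $nx=u$ for some $x=[a,\,\ell]$ and $n\in\Z$; pushing to level $\ell$ this reads $n\,a\equiv h_\ell$ after applying the transition maps, so $h_\ell$ lies in $n\Z^{n(\ell)}$, forcing $n\mid h_j^\ell$ for every $j$ and hence $n\mid\gcd(h_1^\ell,\,\dots,\,h_{n(\ell)}^\ell)=1$; thus $n=\pm1$ and $x=\pm u$, so (1) holds and Lemma \ref{weak} gives $\Q=\Z$.

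The main obstacle I anticipate is the care required in the direct-limit manipulation: the equation $nx=u$ need not hold at the level where $x$ is first represented, only after applying some finite composite of transition maps, so I must telescope to a common level $\ell$ before reading off divisibility, and I must check that passing from $h_k$ to $h_\ell$ preserves (in the ``only if'' case) and reflects (in the ``if'' case) the gcd condition. The nonnegativity and integrality of the $A_k$ make the divisibility argument clean, but one must confirm that $\gcd$ is genuinely stable under the transitions — that it cannot drop to $1$ at a deeper level once it exceeds $1$ — which follows since each entry of $h_\ell$ is a nonnegative integer combination of entries of $h_k$ and so retains the common divisor $d$.
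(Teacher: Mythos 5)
Your proposal is correct and follows exactly the route the paper intends: the paper gives no written proof of Corollary \ref{gcd}, presenting it as an immediate consequence of Lemma \ref{weak} via the identification of the order unit with the path-count vectors $h_k$ in the direct limit, and your argument supplies precisely those omitted details (including the observation $A_{k+1}h_k=h_{k+1}$ and the divisibility reading of $nx=u$ at a common level). The only cosmetic remark is that condition (1) of Lemma \ref{weak}, read literally, is always violated by $x=-u$, $n=-1$; your ``$n=\pm1$, $x=\pm u$'' conclusion handles this with the same implicit convention the paper itself uses, so no genuine gap arises.
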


\bigskip

\begin{lem}\label{new}
Let $(B,\,\leq)$ be a properly ordered Bratteli diagram \st the minimal path $e_{\rm min}=(e_1,\,e_2,\,\dots)$ satisfies $$r(e_k)=v_1^k,\ \ \forall k\geq 1.$$
Then the associated Bratteli-Vershik system $(X_B,\,T_B)$ is weakly mixing if and only if the following  condition $C$(\ref{c}) is satisfied:
$$
\forall\ k\geq 1, \ \exists n; \ \{n,n+1\}\subset{\mathcal N}(C(e^k),\,C(e^k))\eqno{C(4.7)}\label{c}
$$
where $C(e^k)$ is the cylinder set defined by the finite path $e^k=(e_1,\,\dots,\,e_k)$.
\end{lem}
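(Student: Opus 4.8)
The plan is to treat the two implications separately, using the convenient criterion recorded just before the lemma: by \cite[Proposition 3]{indian}, $(X_B,\,T_B)$ is weakly mixing if and only if for every pair of nonempty open sets $U,\,V$ there is some $n\in\N$ with $\{n,\,n+1\}\subseteq\mathcal N(U,\,V)$. The forward implication is then immediate: if $(X_B,\,T_B)$ is weakly mixing, specialize this criterion to $U=V=C(e^k)$ to obtain condition $C(\ref{c})$ for each $k$. All the work is in the converse, where I would show that the consecutive-return behaviour along the distinguished cylinders $C(e^k)$ propagates to arbitrary open sets.

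For the converse, the key structural facts are that $(X_B,\,T_B)$ is a Cantor minimal system (the diagram is simple and properly ordered, so the Vershik map $\varphi_B=T_B$ is minimal), and that the cylinders $C(e^k)$ form a decreasing neighbourhood basis of the minimal path $x_{\min}=e_{\rm min}$, with $\bigcap_k C(e^k)=\{x_{\min}\}$. Since the system is minimal, $x_{\min}$ has dense orbit. Given nonempty open $U,\,V$, I would first pick integers $s\geq 0$ with $T_B^{\,s}x_{\min}\in U$ and $t>s$ with $T_B^{\,t}x_{\min}\in V$; both sets of return times are infinite by minimality, so such a pair with $t>s$ exists. Because $T_B^{\,s}$ and $T_B^{\,t}$ are homeomorphisms, continuity lets me choose $k$ large enough that $T_B^{\,s}C(e^k)\subseteq U$ and $T_B^{\,t}C(e^k)\subseteq V$ simultaneously.

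Now I invoke condition $C(\ref{c})$ at this level $k$: writing $A=C(e^k)$, there is $n$ with $T_B^{\,n}A\cap A\neq\emptyset$ and $T_B^{\,n+1}A\cap A\neq\emptyset$, and by the symmetry $\mathcal N(A,\,A)=-\mathcal N(A,\,A)$ I may take $n\geq 0$. The transfer is then a direct computation: if $z\in T_B^{\,n}A\cap A$, write $z=T_B^{\,n}w$ with $w\in A$; then $T_B^{\,s}w\in U$ while $T_B^{\,t}z=T_B^{\,t-s+n}(T_B^{\,s}w)\in V$, so $t-s+n\in\mathcal N(U,\,V)$, and the same argument applied to $T_B^{\,n+1}A\cap A$ gives $t-s+n+1\in\mathcal N(U,\,V)$. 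Setting $m=t-s+n\geq 1$ produces the consecutive pair $\{m,\,m+1\}\subseteq\mathcal N(U,\,V)\cap\N$ demanded by the criterion, so $(X_B,\,T_B)$ is weakly mixing.

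The argument uses the hypothesis $r(e_k)=v_1^k$ only to pin down the cylinders $C(e^k)$ as neighbourhoods of $x_{\min}$; what is really needed is that they form a neighbourhood basis at a point whose orbit is dense, which minimality guarantees. The single point requiring genuine care — the nearest thing to an obstacle — is the bookkeeping that forces the consecutive pair into $\N$ rather than merely $\Z$, which I handle by choosing $t>s$ and normalizing $n\geq 0$ as above. Everything else is the continuity-and-density transfer, which is routine.
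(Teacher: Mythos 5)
Your proof is correct. The forward direction is the same as the paper's (both just quote \cite[Proposition 3]{indian}), and the core transfer idea in the converse is also the same: translate a consecutive pair in $\mathcal N(C(e^k),C(e^k))$ by suitable powers of $T_B$ landing in $U$ and $V$. Where you differ is in how those powers are produced. The paper argues combinatorially through the Kakutani--Rokhlin tower over $v_1^k$: its Step~1 uses that every finite path $a$ from $v^0$ to $v_1^k$ is $T_B^i(e^k)$ for some $0\leq i< h_1^k$, so the consecutive pair propagates to $\mathcal N(C(a),C(b))$ for all such $a,b$; its Step~2 then uses that, for $k$ large, any cylinder contains a sub-cylinder ending at $v_1^k$. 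You instead use a soft topological argument: minimality gives a dense forward orbit of $x_{\min}$, the sets $C(e^k)$ form a nested clopen neighbourhood basis at $x_{\min}$, and continuity of $T_B^s$, $T_B^t$ lets you place $T_B^sC(e^k)\subseteq U$ and $T_B^tC(e^k)\subseteq V$. Your version buys some generality --- as you note, it never uses the hypothesis $r(e_k)=v_1^k$ beyond identifying the cylinders, and it would work for any decreasing clopen neighbourhood basis at any point of a minimal system --- and you are also more careful than the paper about normalizing the consecutive pair into $\N$ (via the symmetry of $\mathcal N(A,A)$ and the choice $t>s$), a bookkeeping point the paper leaves implicit. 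The paper's version, by contrast, stays entirely inside the Bratteli--Vershik combinatorics, which is the form actually exploited in the proof of Theorem~\ref{propo}. Both arguments are sound.
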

\begin{proof}
By \cite[Proposition 3]{indian}  condition $C($\ref{c}) is necessary. We prove its sufficiency in two steps.
\bigskip

{\noindent} {\bf Step 1:} Condition $C$(\ref{c}) implies that
$$
\forall\ k\geq 1, \forall \ a, b, \exists n; \ \{n,n+1\}\subset{\mathcal N}(C(a),\,C(b)),\eqno{C_1(4.7)}\label{c1}
$$
where $a, b$ are two finite paths from $v^0$ to $v_1^k$.
\bigskip

To see this, let us first notice that if $\{n, n+1\}\subset{\mathcal N}(C(e^k),\,C(e^k))$ and $0\leq i\leq h_1^k$, then 
$$\{n+i, n+i+1\}\subset{\mathcal N}(C(e^k),\,C(T^i(e^k))).$$
As for $a, b\in E(v_1^0,\,v_1^k)$, with $a<b$, there exists $0\leq i\leq h_1^k$ and $1\leq j\leq h_1^k-i$ \st $a=T_B^i(e^k)$ and $b=T_B^{j+i}(e^k)$, condition $C_1($\ref{c1}.7) follows.
\bigskip

\noindent{\bf Step 2:} Condition $C_1(\ref{c1}.7$) implies that $(X_B,\,T_B)$ is weakly mixing.
\medskip

As the cylinders form a basis for the topology on $X_B$,  it is enough to show that for any two cylinder sets $U$ and $V$, $\mathcal N(U,\,V)$ contains two consecutive integers. Then for $k$ large enough there exist  two finite paths $a, b\in E(v^0,\,v_1^k)$ \st  $C(a)\subset U$ and $C(b)\subset V$. As $\mathcal N(C(a),\,C(b))\subset \mathcal N(U,\,V)$, step 2 is proved.

\end{proof}

\bigskip

Let us recall that if $\left\{h_1, h_2, \dots, h_n\right\}$ is a set of $n$ positive integers having greatest common divisor one, then for every integer $m$ bigger than $({\rm min}(h_i)-1)({\rm max}(h_i)-1)$, there exist $x_1, x_2, \dots, x_n\in\Z^+$ \st $m=\sum_{j=1}^nx_jh_j$. This result was proved in 1935 by I. Schur, but not published until 1942 by A. Brauer \cite{brauer}. This  will be used in the proof of Proposition \ref{propo}. 
\bigskip

\noindent{\bf Proof of Theorem \ref{propo}.}

Let $B=(V,\,E)$ with $V_0=\{v^0\}$ and $V_k=\{v_1^k,\,\dots,\,v_{n(k)}^k\}$. By Lemma \ref{gcd} and Schur's result, there exist for each  $k\geq 1$, some $\ell_k\in\N$  and $x_1^k,\,\dots,\,x_{n(k)}^k\in\N\cup\{0\}$ \st 
\begin{equation}
\ell_kh_1^k+1=\sum_{j+1}^{n(k)} x_j^kh_j^k; \ x_j^k\geq 0. \label{recall}
\end{equation}
By induction, we can construct a sequence 
$$1=k_1<k_2<k_3<\cdots $$
\st 
\begin{eqnarray*}
\# e(v_1^{k_m} ,\,v_1^{k_{m+1}})&>&\ell_{k_m}+x_1^{k_m},\\
\# e(v_j^{k_m},\,v_1^{k_{m+1}})&>&x_j^{k_m},\ \ \  2\leq j\leq n(k_{m}).
\end{eqnarray*}
where for $p<q$, $\#e(v_j^p,\,v_i^q)$ is the number of finite paths from $v_j^p$ to $v_i^q$. 

So by telescoping the diagram along the sequence $({k_m})_{m\geq 1}$, we can assume that for every $k\geq 1$,  (\ref{recall}) holds and 
\begin{eqnarray*}
\# e(v_1^{k} ,\,v_1^{k+1})&>&\ell_{k}+x_1^{k},\\
\# e(v_j^{k},\,v_1^{k+1})&>&x_j^{k},\ \ \  2\leq j\leq n(k).
\end{eqnarray*}

To complete the proof of the theorem, we define an ordering $\leq$ on $B=(V,\,E)$ satisfying condition $C$(\ref{c}) and \st $(B,\,\leq)$ is properly ordered. For $k\geq 1$ and $2\leq j\leq n(k+1)$, we order the edges of $r^{-1}(v_j^{k+1})$ from left to right. To describe the linear order on $r^{-1}(v_1^k)$, let us introduce the following notation.

\bigskip
For $1\leq j\leq n(k)$, let ${\rm max}_i^k=\#e(v_i^k,\,v_1^{k+1})$ and 
$$e(v_i^k,\,v_1^{k+1})=\{e(i,\,j):\ 1\leq j\leq {\rm max}_i^k\}.$$

Then the order on $r^{-1}(v_1^{k+1})$ is determined by ordering the pairs $\{(i,\,j):\ 1\leq j\leq {\rm max}_i^k,\ 1\leq i\leq n(k)\}$ as follows:
 \begin{eqnarray*}
&(1,\,1)&<(1,\,2)<\dots <(1,\,\ell_k+x_1^k)<\\
&(2,\,1)&<(2,\,2)<\dots <\dots <(2,\,x_2^k)<\\
&\vdots&\\
&(n(k),\,1)&<(n(k),\,2)<\dots <(n(k),\,x_{n(k)}^k)<\\
&(1,\,\ell_k+x_1^k+1)&<\dots <(1,\,{\rm max}_1^k)<\\
&(2,\,x_2^k+1)&<\dots <(2,\,{\rm max}_2^k)<\\
&\vdots&\\
&(n(k),\,x_{n(k)}^k+1)&<\dots <(n(k),\,{\rm max}_{n(k)}^k).
\end{eqnarray*}

It is easy to check that with this ordering $(B,\,\leq)$ is properly ordered with a single min path (resp. max path) going through $v_1^k$ (resp. $v_{n(k)}^k$) for all $k\geq 1$.
\bigskip

To finish the proof, let us verify that condition $C$(\ref{c}) is satisfied.
Let $e=(e_1,\,e_2,\,\dots)$ be the minimal path of $X_B$. Then by the above definition of the ordering on $B$, $e_n\in e(v_1^{n-1},\,v_1^n)$ is equal to $e_n(1,\,1)$. So $e$ belongs to the cylinder set $C(e^k)$, where $e^k=(e_1,\,e_2,\,\dots,\,e_k)$. Since $\#e(v^0,\,v_1^k)=h_1^k$, 
$$T_B^{\ell_kh_1^k}(e)=(e_1,\,e_2,\,\dots,\,e_k,\,e_{k+1}(1,\,\ell_k+1),\,\dots)\in C(e^k)$$
and therefore, 
$$\ell_kh_1^k\in{\mathcal N}(C(e^k),\,C(e^k)).$$
Let $a=(e_1,\,\dots,\,e_k,\,e_{k+1}(1,\,\ell_k+1),\,\dots)\in C(e^k)$. By (\ref{recall}), $$T_B^{\ell_kh_1^k+1}(a)=(e_1,\,\dots,\,e_k,\,e_{k+1}(1,\,\ell_k+x_1^k+1),\,\dots)$$
also belongs to $C(e^k)$ and therefore, condition $C$(\ref{c}) is verified.

$\hfill\Box$
\bigskip

We adopt the following notation for Proposition \ref{trunc}.

\begin{itemize}
\item $\Sigma_k=\{ {\rm paths \ from} \ v_0\  {\rm to }  \ v_i^k\in V_k, i=1,\dots,n(k)\}$.
\item $\Sigma_k^{\Z}=$ shift space with the finite alphabet $\Sigma_k$.
\item $\pi_k: X_B\rightarrow \Sigma_k$ truncation map, restricts each infinite path  of $X_B$ to its first $k$ coordinates.
\item $Y_k=\{(\pi_k(T_B^nx))_{n\in\Z}:\ x\in X_B\}\subseteq\Sigma_k^\Z$.
\item $S_k$ is the shift map on $Y_k$.
\end{itemize}
It is well-known that $\pi_k$ is a factor map and $(X_B,\,T_B)$ is an extension of $(Y_k,\,S_k)$.

\begin{prop}\label{trunc}
Let $(B,\,\leq)=(V,\,E,\,\leq)$ be a properly ordered Bratteli diagram and 
$(X_B,\, T_B)$ the associated Bratteli-Vershik system. With the above notation, $(X_B,\,T_B)$ is weakly mixing if and only if  $(Y_k,\, S_k)$  is weakly mixing for all $k\geq 1$.
\end{prop}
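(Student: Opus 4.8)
The plan is to exploit the general principle that weak mixing of an extension is governed by the weak mixing of its factors, together with the fact that $(X_B,\,T_B)$ is, up to a natural inverse limit, the limit of the systems $(Y_k,\,S_k)$. First I would establish the forward implication, which is the easy direction: if $(X_B,\,T_B)$ is weakly mixing, then so is each factor $(Y_k,\,S_k)$. This is immediate from the definition of weak mixing via thickness of return-time sets, since a factor map $\pi$ pulls back nonempty open sets of $Y_k$ to nonempty open sets of $X_B$ and satisfies $\mathcal N(\pi^{-1}U,\,\pi^{-1}V)\subseteq \mathcal N(U,\,V)$ (using that $\pi$ intertwines $T_B$ and $S_k$); so thickness of the return-time set upstairs forces thickness downstairs. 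Equivalently, one can argue spectrally: a continuous eigenfunction for $(Y_k,\,S_k)$ pulls back to a continuous eigenfunction for $(X_B,\,T_B)$, so $E(Y_k,\,S_k)\subseteq E(X_B,\,T_B)=\Z$, and $E(Y_k,\,S_k)=\Z$ is exactly weak mixing for the minimal system $(Y_k,\,S_k)$.

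The substance is in the converse. Assume $(Y_k,\,S_k)$ is weakly mixing for every $k\geq 1$, and fix two nonempty open sets $U,\,V$ in $X_B$. Since the cylinder sets form a basis, it suffices to treat $U=C(a)$, $V=C(b)$ for finite paths $a,\,b$; choosing $k$ to be the common length of $a$ and $b$, the cylinders $C(a)$ and $C(b)$ are precisely the $\pi_k$-preimages of the corresponding length-$k$ symbols, viewed as clopen subsets of $Y_k$. Here I would use the key structural point that the truncation map $\pi_k$ is a \emph{bijection} on cylinders of depth $k$: a cylinder $C(a)$ of $X_B$ is mapped by the coding $x\mapsto (\pi_k(T_B^n x))_n$ onto a cylinder of $Y_k$ determined by the current symbol, and the return times match exactly, i.e. $\mathcal N(C(a),\,C(b))=\mathcal N(\bar U,\,\bar V)$ for the corresponding clopen sets $\bar U,\,\bar V$ in $Y_k$. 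Then weak mixing of $(Y_k,\,S_k)$, in the equivalent two-consecutive-integers form of \cite[Proposition 3]{indian}, gives some $n$ with $\{n,\,n+1\}\subseteq\mathcal N(\bar U,\,\bar V)$, hence $\{n,\,n+1\}\subseteq\mathcal N(C(a),\,C(b))$. Applying the same \cite[Proposition 3]{indian} criterion upstairs then yields weak mixing of $(X_B,\,T_B)$.

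The main obstacle I anticipate is making precise the identification of return-time sets under $\pi_k$, namely that $\mathcal N(C(a),\,C(b))$ computed in $X_B$ agrees with $\mathcal N(\bar U,\,\bar V)$ computed in $Y_k$. The containment $\mathcal N(C(a),\,C(b))\subseteq\mathcal N(\bar U,\,\bar V)$ is automatic from $\pi_k$ being a factor map, but the reverse containment requires that the depth-$k$ symbol of $\pi_k(T_B^n x)$ genuinely records which depth-$k$ cylinder $T_B^n x$ lies in, so that a visit to the symbol $b$ at time $n$ in $Y_k$ lifts to a visit of the actual orbit to $C(b)$. This is where one must use that the alphabet $\Sigma_k$ consists of the full finite paths from $v_0$ to level $k$, so the symbol of a point in $Y_k$ determines its depth-$k$ cylinder uniquely; I would spell this out carefully, as it is exactly the content that upgrades the cheap factor-direction inequality into an equality of return-time sets and thereby carries the two-consecutive-integers property back up to $X_B$. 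With that identification in hand the proof closes immediately via \cite[Proposition 3]{indian} in both directions.
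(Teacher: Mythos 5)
Your proof is correct and follows essentially the same route as the paper: the forward direction via the factor map, and the converse by identifying each depth-$k$ cylinder of $X_B$ with the zero-coordinate cylinder of $Y_k$ so that the return-time sets coincide, then transferring the weak-mixing property back up. The only cosmetic difference is that you invoke the two-consecutive-integers criterion of \cite[Proposition 3]{indian} where the paper transfers thickness directly; these are equivalent characterizations already recorded in Section 4.
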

\begin{proof}
If
$(X_B,\, T_B)\in {\mathcal W}{\mathcal M}$ then $(Y_k,\,S_k)\in {\mathcal W}{\mathcal {M}}$ as $\pi_k$ is a factor map.  Conversely,
it is enough to observe that for any two cylinder sets $U=[e_1,\cdots, e_k]$ and $V=[f_1, \cdots, f_k]$ of
$X_B$, $\mathcal N(U,\,V)$ is thick. Let us denote by $\tilde{U}$ (resp. $\tilde{V}$) the cylinder of $Y_k$ given by $\tilde{U}=\{y\in Y_k;\ y_0=(e_1,\,\dots,\,e_k)\}$ (resp. $\tilde{V}=\{y\in Y_k;\ y_0=(f_1,\,\dots,\,f_k)\}$). If $(Y_k,\,S_k)\in{\mathcal W}{\mathcal M}$ then $\mathcal {N}(\tilde{U},\,\tilde{V})$ is thick, and therefore $\mathcal {N}(U,\,V)$ has the same property. 
\end{proof}

\bigskip

One class of examples of simple dimension groups  satisfying  both the conditions of Proposition \ref{weakmixing} occurs  when we combine the gcd$=1$ condition with Corollary \ref{circulant}. 
\bigskip

 For example, let $G$ be an
extension,
$\Z \rightarrow G \rightarrow \Z[1/3]$ with the strict ordering induced by
the map to $\Z[1/3]$; an easy example  is the
stationary direct limit implemented by the matrix $\( \smallmatrix 2 & 1
\\ 1 & 2\\  \endsmallmatrix\)$; this yields a non-split extension. 
By choosing an element $u\in G^+$, not an integer multiple of any
other elements of $G$, $\Q(G,\,u)=\Z$. Moreover, the unique state $\tau$ of $G,\,u)$ satisfies $\tau(G)\subset \Q$. By Corollary \ref{circulant} and Proposition \ref{weakmixing}, SOE$(G,\,u)\subset {WM}.$

\medskip

On the other hand, $G/\Inf G $ is isomorphic to $\Z[1/3]$, so $(X,T)$ is
orbit equivalent to the $3$-odometer, which is not weakly mixing. Recall that by \cite[Corollary 2]{GPS},  any
uniquely ergodic Cantor minimal system is orbit equivalent to a minimal
Cantor system which is a Denjoy's or an odometer, hence is not weakly
mixing.
\medskip

However, when the system is not uniquely ergodic, somewhat more interesting
phenomena can occur and we may have a system which is not even orbit equivalent to a non-weakly mixing system. For instance, if  we consider $G$ to be the
 direct limit implemented by the matrices $\( \smallmatrix k_n & l_n
\\ l_n & k_n\\  \endsmallmatrix\)$, $n\geq 1$,  then $\Inf(G)$ is trivial and by Proposition  \ref{unique trace} there exists a rational valued trace on it. It is not hard to choose a sub-diagram which satisfies the gcd$=1$ condition as well. Then by Corollary \ref{last} any Cantor minimal system orbit equivalent to the Vershik system associated to this sub-diagram, is weakly mixing.

\bigskip


\end{document}